\newtheorem{theorem}{Theorem}[section]
\newtheorem{corollary}[theorem]{Corollary}
\definecolor{plum}{rgb}{1.0, 0.0, 1.0}
\begin{document}

\title[Comparison Results, Exit Time Moments, and Eigenvalues]{Comparison Results, Exit Time Moments, and Eigenvalues on Riemannian Manifolds with a lower Ricci Curvature Bound}

\author{Don Colladay, Jeffrey J. Langford, and Patrick McDonald}

\address{Department of Mathematics, Bucknell University, Lewisburg, Pennsylvania 17837}

\email{jeffrey.langford@bucknell.edu}

\address{Division of Natural Science, New College of Florida, Sarasota, FL 34243}

\email{colladay@ncf.edu}
\email{mcdonald@ncf.edu}

\date{\today}

\begin{abstract}
We study the relationship between the geometry of smoothly bounded domains in complete Riemannian manifolds and the associated sequence of $L^1$-norms of exit time moments for Brownian motion.  We establish bounds for Dirichlet eigenvalues and, for closed manifolds, we establish a comparison result for elements of the moment sequence using lower bounds on Ricci curvature. 
\end{abstract}

\keywords{torsional rigidity, heat content, Dirichlet Problem, Brownian motion}

\subjclass[2010]{58J65, 58J50, 35P15}

\maketitle

\section{Introduction}
The relationship between the geometry of a complete Riemannian manifold and properties of the associated collection of Brownian paths is the subject of many papers.  Driven in part by the attraction of developing new tools and intuitions for old problems, the associated literature has  grown steadily with a variety of cross-fertilizations appearing over the course of the last twenty years.  This paper contributes to efforts in this direction: we study the comparison geometry and spectral geometry of smoothly bounded precompact domains in complete Riemannian manifolds using a collection of invariants that arise naturally in the context of probability.  These invariants are constructed using Brownian paths and the volume form; they are naturally connected to the heat kernel and the Dirichlet spectrum.  Our results shed light on this connection.

 To formulate our results we need some notation.  Let $(M, g)$ be a complete Riemannian manifold and let $\Omega\subseteq M$ be a smoothly bounded precompact domain.  We will denote by $X_t$ Brownian motion on $M$ and by ${\mathbb P}^x$ the associated probability measure charging Brownian paths beginning at $x\in M.$  Let $\tau $ be the first exit time of $X_t$ from $\Omega$:
\[
\tau = \inf\{t\geq 0 : X_t \notin \Omega\}.
\]
Then $\tau$ is a random variable whose expectation with respect to the measure ${\mathbb P}^x$ solves a Poisson problem on $\Omega.$  More precisely, writing $u_1(x) = {\mathbb E}^x[\tau],$ we have that $u_1$ satisfies 
\begin{equation}\label{poission1.1}
-\Delta_g u_1=1  \quad \textup{in } \Omega,\qquad u_1=0\quad \textup{on }\partial \Omega.
\end{equation}
  
If $dV_g$ denotes the volume form on $M$ and we integrate over $\Omega$, we obtain an invariant of the domain:
\[
T_1(\Omega) = \int_{\Omega} u_1(x)\, dV_g(x).
\]
The invariant $T_1(\Omega)$ is called the {\it torsional rigidity} of the domain $\Omega$ and it has a long history.   First studied in the nineteenth century as part of the theory of elastic bodies, it exhibits properties analogous to those of the first Dirichlet eigenvalue (see \cite{P} for background on torsional rigidity and fundamental frequency).  Our first collection of invariants is a straightforward generalization of torsional rigidity obtained by integrating higher moments of the exit time.  Given a positive integer $n$, we define 
\begin{equation}\label{kthmoment}
T_n(\Omega) =  \int_{\Omega} {\mathbb E}^x[\tau^n] \,dV_g(x).
\end{equation}
We call the collection $\{T_n(\Omega)\}_{n\in {\mathbb N}}$ the {\it $L^1$-moment spectrum of $\Omega.$}  Our results involve the degree to which the geometry of a bounded domain can be studied using this family of invariants.  

We can express $T_n(\Omega)$ using the heat kernel by integrating twice over the domain $\Omega$ (see Section 2).  Because the heat kernel can be written in terms of eigenfunctions for the Dirichlet problem, it is natural to consider a second collection of invariants indexed by the values of the Dirichlet spectrum associated to $\Omega.$  To proceed we need more notation.

Let $\textup{spec}(\Omega)$ denote the Dirichlet spectrum listed in increasing order, with multiplicity.  Given an eigenvalue $\lambda\in \textup{spec}(\Omega),$ let $E_\lambda$ be the eigenspace associated to $\lambda$, and let $a_\lambda^2$ be the square of the $L^2$-norm of the orthogonal projection of the constant function $1$ on $E_\lambda.$  Let $\textup{spec}^*(\Omega)$ be the collection of real numbers $\nu$ for which $\nu \in \textup{spec}(\Omega)$ and $a_\nu^2>0.$  Then, as explained in Section 2 below, the invariants $a_\nu^2$ satisfy
\[
\textup{Vol}_g(\Omega) = \sum_{\nu \in \textup{spec}^*(\Omega)} a_\nu^2.
\]
We can now state our first result:

\begin{theorem}\label{Thm:LnEst} Let $M$ be a complete Riemannian manifold and $\Omega\subseteq M$ a smoothly bounded precompact domain.  Let $\lambda_n$ be the $n$th Dirichlet eigenvalue and denote by $\textup{spec}^*(\Omega)$ the values of the Dirichlet spectrum for which the associated eigenspace is not orthogonal to constants.  Then, with $a_\nu^2$ as above, we have the estimate
\begin{equation}\label{Ineq:LambdanEst}
\lambda_n(\Omega) \leq 
\frac{ \displaystyle \frac{T_{2k-1}(\Omega)}{(2k-1)!} - \sum_{\underset{\nu <\lambda_n(\Omega)}{\nu \in \textup{spec}^{\ast}(\Omega)}} a_{\nu}^2\left(\frac{1}{\nu}\right)^{2k-1}}{\displaystyle \frac{T_{2k}(\Omega)}{(2k)!} -  \sum_{\underset{\nu <\lambda_n(\Omega)}{\nu \in \textup{spec}^{\ast}(\Omega)}} a_{\nu}^2\left(\frac{1}{\nu}\right)^{2k}}.
\end{equation}
Moreover, if $\lambda_n(\Omega)\in \textup{spec}^{\ast}(\Omega)$, the inequality becomes an equality in the limit as $k\to \infty$.
\end{theorem}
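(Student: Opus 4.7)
The whole argument hinges on the spectral identity
\[
\frac{T_n(\Omega)}{n!}=\sum_{\nu\in\textup{spec}^{\ast}(\Omega)}\frac{a_\nu^2}{\nu^n},
\]
which follows from the heat-kernel/eigenfunction expansion of $\mathbb{E}^x[\tau^n]$ (the integrated representation developed in Section~2). Granting this, I would introduce the tail moment
\[
R_k\;:=\;\frac{T_k(\Omega)}{k!}-\sum_{\substack{\nu\in\textup{spec}^{\ast}(\Omega)\\ \nu<\lambda_n(\Omega)}}\frac{a_\nu^2}{\nu^k}\;=\;\sum_{\substack{\nu\in\textup{spec}^{\ast}(\Omega)\\ \nu\ge\lambda_n(\Omega)}}\frac{a_\nu^2}{\nu^k},
\]
so that the claimed inequality (\ref{Ineq:LambdanEst}) is precisely $\lambda_n(\Omega)\,R_{2k}\le R_{2k-1}$.

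In these coordinates, the inequality is a one-line computation:
\[
R_{2k-1}-\lambda_n(\Omega)\,R_{2k}=\sum_{\substack{\nu\in\textup{spec}^{\ast}(\Omega)\\ \nu\ge\lambda_n(\Omega)}}a_\nu^2\,\frac{\nu-\lambda_n(\Omega)}{\nu^{2k}}\;\ge\;0,
\]
every summand being non-negative by construction. The role of the $(2k-1,2k)$ pairing is exactly to arrange that the factor $\nu-\lambda_n(\Omega)$ appears in the numerator with a clean denominator; this is what pins the bound to $\lambda_n(\Omega)$ rather than to some other moment combination.

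For the asymptotic equality when $\lambda_n(\Omega)\in\textup{spec}^{\ast}(\Omega)$, I would factor out the extremal eigenvalue:
\[
\lambda_n(\Omega)^{2k}R_{2k}=a_{\lambda_n(\Omega)}^2+\sum_{\substack{\nu\in\textup{spec}^{\ast}(\Omega)\\ \nu>\lambda_n(\Omega)}}a_\nu^2\Bigl(\frac{\lambda_n(\Omega)}{\nu}\Bigr)^{2k}\;\longrightarrow\;a_{\lambda_n(\Omega)}^2
\]
as $k\to\infty$, by dominated convergence (each summand vanishes pointwise and is dominated by the summable sequence $a_\nu^2$, whose total is $\textup{Vol}_g(\Omega)$). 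The same argument with $2k-1$ in place of $2k$ gives $\lambda_n(\Omega)^{2k-1}R_{2k-1}\to a_{\lambda_n(\Omega)}^2$, and forming the ratio yields
\[
\frac{R_{2k-1}}{R_{2k}}=\lambda_n(\Omega)\cdot\frac{\lambda_n(\Omega)^{2k-1}R_{2k-1}}{\lambda_n(\Omega)^{2k}R_{2k}}\;\longrightarrow\;\lambda_n(\Omega).
\]

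The one substantive ingredient is the spectral identity for $T_n(\Omega)$, which relies on justifying an interchange of sum and integral in the heat semigroup representation on a bounded domain — standard but the real content. Everything afterward is bookkeeping plus a single dominated-convergence limit; one should also confirm $R_{2k}>0$ so the ratio is defined, which follows since $\sum_\nu a_\nu^2=\textup{Vol}_g(\Omega)>0$ while only finitely many eigenvalues lie below $\lambda_n(\Omega)$.
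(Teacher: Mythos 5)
Your proof is correct and takes a genuinely different route from the paper for the inequality. The paper proves \eqref{Ineq:LambdanEst} by a variational argument: it uses $u = u_k - \sum_{j=1}^{n-1}(u_k,v_j)v_j$ (with $u_k$ from the Poisson hierarchy \eqref{Eqn:PoisHier} and $v_j$ low eigenfunctions) as a trial function in the Rayleigh quotient for $\lambda_n$, and then integrates by parts repeatedly to express $\int|\nabla u_k|^2$, $\int u_k^2$, and $(u_k,v_j)^2$ in terms of $T_{2k-1}$, $T_{2k}$, and $a_\nu^2$. You instead work directly from the Dirichlet-series identity $\Gamma(n+1)\zeta(n)=T_n(\Omega)$ of \eqref{Eqn:Zeta}--\eqref{Eqn:ZetaTn} and observe that the claimed quotient is a tail-weighted average of eigenvalues $\nu\geq\lambda_n$, so the inequality $R_{2k-1}-\lambda_n R_{2k}\geq 0$ is termwise trivial. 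Your argument is shorter and makes the structure transparent (the bound really is just ``a weighted mean of $\{\nu\geq\lambda_n\}$ dominates $\lambda_n$''), whereas the Rayleigh-quotient computation has the side benefit of yielding the intermediate identities \eqref{Eqn:NFT} and \eqref{Eqn:TEN}, which the paper reuses in the Cheeger-constant estimate of Theorem \ref{Thm:CSE}. Interestingly, for the asymptotic equality claim the paper itself abandons the trial-function picture and switches to exactly the spectral-series manipulation you use, bounding the tail by $\textup{Vol}_g(\Omega)(\lambda_n/\nu_{m+1})^{2k-1}$; your dominated-convergence phrasing is equivalent. One small point to tighten: the positivity of $R_{2k}$ follows not merely because ``only finitely many eigenvalues lie below $\lambda_n$,'' but because $\textup{spec}^\ast(\Omega)$ is necessarily infinite --- the constant function $1$ does not vanish on $\partial\Omega$ and so cannot be a finite linear combination of Dirichlet eigenfunctions, hence some $\nu\geq\lambda_n$ has $a_\nu^2>0$.
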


This result is an extension of the work of Dryden et al.  \cite{DLM} where the case $n$=1 was established without an equality claim.  The original motivation for the $n$=1 result involved applications in shape optimization and a sharpening of an inequality of P\'olya for Euclidean domains (see \cite{BBV} and \cite{BFNT}).  Similar results were obtained by Hurtado et al.  \cite{HMP3} in the context of warped product spaces; their work suggests that there should be a rich comparison geometry theory (see also \cite{HMP1}, \cite{HMP2}, \cite{Mc2}).  In the remainder of our paper, we develop this line of thought for spaces with Ricci curvature bounded below.  

A great deal is known about the structure of Riemannian manifolds with a lower Ricci curvature bound (see, for example, the survey \cite{W}).  For our purposes, lower bounds on Ricci curvature provide model spaces for comparison that, in turn, provide tools to establish estimates for our invariants.  Chief among the tools we employ is an isoperimetric result due to B\'erard et al. \cite{BBG}.  For closed (compact without boundary) Riemannian manifolds, the result of \cite{BBG} provides a Euclidean sphere, ${\mathbb S}^d(R)$, as a comparison space where the radius $R$ depends only on the dimension, the diameter, and the Ricci bound (see Theorem \ref{Thm:IsoIneq}).   We prove:

 \begin{theorem}\label{Thm:MSI}
Let $(M,g)$ be a connected $d$-dimensional closed Riemannian manifold with Ricci curvature bounded below by $(d-1)K,$ $K\in {\mathbb R},$ and let  $({\mathbb S}^d(R),g_0)$ be the Euclidean sphere prescribed by \cite{BBG}.  Let $\Omega\subseteq M$ be a smoothly bounded domain and let $\Omega^*$ be a geodesic ball in ${\mathbb S}^d(R)$ satisfying $\frac{\textup{Vol}_g(\Omega)}{\textup{Vol}_g(M)} = \frac{\textup{Vol}_{g_0}(\Omega^*)}{\textup{Vol}_{g_0}(\mathbb{S}^d(R))}.$  Then the moment spectra satisfy the inequality
 \begin{equation}\label{Ineq:MomSp}
 \frac{T_n(\Omega)}{\textup{Vol}_g(\Omega)}\leq\frac{T_n(\Omega^{\ast})}{\textup{Vol}_{g_0}(\Omega^{\ast})}
 \end{equation}
 for each $n\geq 1$.
 Moreover, if $K>0$ and $\Omega^{\ast \ast}$ is a geodesic ball in ${\mathbb S}^d\left(\frac{1}{\sqrt{K}}\right)$ satisfying $\frac{\textup{Vol}_g(\Omega)}{\textup{Vol}_g(M)} = \frac{\textup{Vol}_{g_1}(\Omega^{**})}{\textup{Vol}_{g_1}\left(\mathbb{S}^d\left(\frac{1}{\sqrt{K}}\right)\right)}$, then we also have
  \begin{equation}\label{Ineq:MomSp2}
 \frac{T_n(\Omega)}{\textup{Vol}_g(\Omega)}\leq\frac{T_n(\Omega^{\ast \ast})}{\textup{Vol}_{g_1}(\Omega^{\ast \ast})}.
 \end{equation}
 If equality holds in \eqref{Ineq:MomSp2} for some index $n$, then $M$ is isometric to the sphere $\mathbb{S}^d\left(\frac{1}{\sqrt{K}}\right)$ and $\Omega$ is isometric to a geodesic ball in $\mathbb{S}^d\left(\frac{1}{\sqrt{K}}\right)$.
  \end{theorem}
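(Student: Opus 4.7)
The plan is to adapt Talenti's symmetrization argument to this curvature-constrained setting, using the isoperimetric inequality of B\'erard--Besson--Gallot (and, for \eqref{Ineq:MomSp2}, its variant with model sphere $\mathbb{S}^d(1/\sqrt{K})$) in place of the Euclidean isoperimetric inequality. The argument proceeds by induction on the moment index $n$, comparing an iterated Poisson hierarchy on $\Omega$ to its radial counterpart on the model ball.

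First I would introduce the exit-time hierarchy on $\Omega$: define $u_n$ recursively by $-\Delta_g u_n = n\, u_{n-1}$ in $\Omega$ with $u_n=0$ on $\partial\Omega$ and $u_0\equiv 1$. Then $u_n(x)=\E^x[\tau^n]$ and $T_n(\Omega)=\int_\Omega u_n\, dV_g$. Define $v_n$ analogously on $\Omega^{\ast}\subseteq \mathbb{S}^d(R)$. Since $\Omega^{\ast}$ is a geodesic ball and the spherical Laplacian is rotationally symmetric, an induction on $n$ shows each $v_n$ is radial and non-increasing from the center of $\Omega^{\ast}$.

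The heart of the argument is a Talenti-type comparison. Let $u_n^{\#}$ denote the radial rearrangement of $u_n$ onto $\Omega^{\ast}$ that preserves the volume-normalized distribution, i.e.\ $\textup{Vol}_{g_0}(\{u_n^{\#}>t\})/\textup{Vol}_{g_0}(\mathbb{S}^d(R))=\textup{Vol}_g(\{u_n>t\})/\textup{Vol}_g(M)$. Integrating the Poisson equation over $\{u_n>t\}$ and combining the divergence theorem, Cauchy--Schwarz, and the coarea formula gives
\[
\textup{Per}_g(\{u_n>t\})^2 \le -\mu_n'(t)\cdot n\!\int_{\{u_n>t\}} u_{n-1}\, dV_g,
\]
where $\mu_n(t)=\textup{Vol}_g(\{u_n>t\})$. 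The BBG inequality replaces $\textup{Per}_g$ with the isoperimetric profile of $\mathbb{S}^d(R)$ evaluated at the matched normalized volume; on $\Omega^{\ast}$ the analogous chain holds with equality, because $|\nabla v_n|$ is constant on level sets and those sets are geodesic balls. Assuming inductively that $u_{n-1}^{\#}\le v_{n-1}$ pointwise on $\Omega^{\ast}$, a Hardy--Littlewood estimate bounds the right-hand integral by the corresponding integral of $v_{n-1}$; comparison of the resulting ODEs for the distribution functions of $u_n$ and $v_n$ then yields $u_n^{\#}\le v_n$ pointwise on $\Omega^{\ast}$. Integrating produces
\[
\frac{T_n(\Omega)}{\textup{Vol}_g(M)} \le \frac{\int_{\Omega^{\ast}} v_n \, dV_{g_0}}{\textup{Vol}_{g_0}(\mathbb{S}^d(R))},
\]
which is equivalent to \eqref{Ineq:MomSp} since the two ratios $\textup{Vol}_g(\Omega)/\textup{Vol}_g(M)$ and $\textup{Vol}_{g_0}(\Omega^{\ast})/\textup{Vol}_{g_0}(\mathbb{S}^d(R))$ agree. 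Inequality \eqref{Ineq:MomSp2} follows by repeating the induction with Gromov's isoperimetric inequality and reference sphere $\mathbb{S}^d(1/\sqrt{K})$.

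For the rigidity statement, suppose equality holds in \eqref{Ineq:MomSp2} for some $n$. Tracing backwards through the induction, equality must occur in the isoperimetric step for almost every $t$ and in Cauchy--Schwarz on almost every level set. The rigidity portion of the L\'evy--Gromov inequality then forces $M$ to be isometric to $\mathbb{S}^d(1/\sqrt{K})$, and equality in Cauchy--Schwarz forces $|\nabla u_n|$ to be constant on each level set, so the level sets are concentric geodesic balls and $\Omega$ itself is a geodesic ball. The main obstacle is the bookkeeping: the volumes $\textup{Vol}_g(M)$ and $\textup{Vol}_{g_0}(\mathbb{S}^d(R))$ generally differ, so the rearrangement is controlled by normalized volumes, and one must verify that the rescaling factor $\textup{Vol}_g(M)/\textup{Vol}_{g_0}(\mathbb{S}^d(R))$ introduced by BBG is compatible with the rescaling absorbed when passing the integral of $u_{n-1}$ through the induction, so that the estimate closes at the level of the ratios $T_n/\textup{Vol}$ and not for the absolute invariants.
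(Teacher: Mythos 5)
Your proposal captures the essential strategy the paper uses: BBG isoperimetric inequality, coarea formula and Cauchy--Schwarz to derive a differential inequality for distribution functions, an ODE comparison, and a reduction of the equality case to the rigidity case of the isoperimetric inequality. The key organizational difference is how the Poisson hierarchy is handled. The paper first proves a standalone PDE comparison (Theorem \ref{Thm:CR}): if $-\Delta_g u = f$ on $\Omega$ and $-\Delta_{g_0} v = f^{\ast}$ on $\Omega^{\ast}$ with $f^{\ast}$ the exact spherical symmetrization of $f$, then $u^{\ast} \leq v$. Iterating through the moment hierarchy is then done in two cheap steps: Theorem \ref{Thm:CR} gives $u_n^{\ast} \leq w_n$ where $-\Delta_{g_0} w_n = n u_{n-1}^{\ast}$, and the maximum principle gives $w_n \leq v_n$ because inductively $u_{n-1}^{\ast}\leq v_{n-1}$. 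You instead run a single combined induction where the inductive hypothesis $u_{n-1}^{\ast}\leq v_{n-1}$ is fed directly into the Talenti estimate via Hardy--Littlewood, bypassing the intermediate $w_n$ and the maximum-principle step. Your route is more compact; the paper's route isolates Theorem \ref{Thm:CR} as a reusable comparison result of independent interest, and it makes the equality analysis cleaner, since the equality case is confined entirely to Theorem \ref{Thm:CR} with $f = n u_{n-1}$. Both are valid, but you should be explicit that in your version equality in the Hardy--Littlewood step, not just in Cauchy--Schwarz and the isoperimetric inequality, must also be traced, whereas the paper avoids this by routing through $w_n$.

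Two places where your sketch is lighter than it should be. First, the rigidity case: the paper's argument in Theorem \ref{Thm:CR} is a genuine computation, deriving $F(\mu_u(t))\mu_u'(t) = -\bigl(\ldots\bigr)^2$ from $u^{\ast\ast} = w$, combining this with the perimeter estimate to force equality in \eqref{Ineq:IsoIneq2} for almost every $t$, and then using a nested union of super-level sets to identify $\Omega$ as a geodesic ball after obtaining the isometry $\Phi: M \to \mathbb{S}^d(1/\sqrt{K})$ via Cheng's theorem. Your ``tracing backwards through the induction'' needs this level of detail to actually close. Second, the normalization bookkeeping you flag at the end is real and is precisely why the paper states everything at the level of ratios $T_n / \mathrm{Vol}$; the constant $\mathrm{Vol}_g(M)/\mathrm{Vol}_{g_0}(\mathbb{S}^d(R))$ enters both the isoperimetric inequality \eqref{Ineq:IsoIneq} and the definition of the spherical symmetrization \eqref{Eqn:DefDR}, and it does cancel as you expect, but one should verify that the change of variables in the Hardy--Littlewood step produces exactly the same factor so that the differential inequality for $\mu_{u_n}(t)/c$ and the ODE for $\mu_{v_n}(t)$ have matching coefficients.
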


To place our work in the literature, we focus our remarks on material involving exit time and comparison geometry that shaped the development of our results.  In an early result in this direction, Debiard et al. \cite{DGM} studied the behavior of heat kernels on geodesic balls. The authors proved a theorem similar to Theorem \ref{Thm:MSI} in which they compared mean exit time for geodesic balls with mean exit time for geodesic balls in a space form.  For Euclidean domains, Aizenman and Simon \cite{AS} used the rearrangement result of Brascamp, Lieb, and Luttinger to prove that for given volume, pointwise mean exit time moments are bounded by the corresponding moments for Brownian motion starting at the center of a ball of the same volume.  In \cite{KMM}, the authors studied Euclidean domains using isoperimetric comparison, rearrangement results for elliptic PDE,  and a description of exit time moments as a solution of a hierarchy of Poisson problems to recover the above result of Aizenman and Simon and establish corresponding results for the $L^1$-moment spectrum.  Using isoperimetric comparison, the results of \cite{KMM} were extended to space forms in \cite{Mc1}.  In \cite{BS}, Burchard and Schmuckenschl\"ager studied the behavior of heat kernels for constant curvature space forms under symmetric rearrangement.  They used their results to bound exit time moments as above, they established the case of equality for domains in space forms, and they conjectured a result that implies Theorem \ref{Thm:MSI} (see Conjecture 4.11 of \cite{BS}).  Recently, Cadeddu et al. \cite{CGL} studied the optimization problem for the first exit time moment (torsional rigidity) and, using symmetric rearrangement, established comparison results under a variety of constraints involving bounded geometry, including the case of smoothly bounded precompact domains in manifolds with Ricci curvature bounded below.  Amongst the tools used in \cite{CGL} is the isoperimetric comparison result of \cite{BBG} cited above (see Section 2).  To establish our results, we use the isoperimetric comparison result of B\'erard et al. \cite{BBG}, symmetrization techniques in the spirit of Talenti \cite{T}, and the description of the $L^1$-moment spectrum in terms of a hierarchy of Poisson problems.    

As a corollary of Theorem \ref{Thm:LnEst} and the techniques used to establish the result, we establish a relationship between higher moments and the Cheeger constant (Theorem \ref{Thm:CSE}).  As a corollary of Theorem \ref{Thm:MSI}, we establish a Faber-Krahn theorem that illustrates how one might extract information contained in the higher moments (Corollary \ref{Cor:FK};  see also \cite{Mc2}).        

The remainder of this note is structured as follows.  In the next section we provide the required background involving exit time moments and symmetrization, including a discussion of the relationship between the $L^1$-moment spectrum and heat content.  In Section 3 we provide proofs of Theorems \ref{Thm:LnEst} and \ref{Thm:MSI} and the corollaries described above.  Along the way, we also establish a rearrangement result for elliptic boundary value problems that we believe is of independent interest (see Theorem \ref{Thm:CR}).

\section{Background}
\subsection{Exit time moments}\label{SubSect:ETM}
As in the Introduction, let $(M,g)$ denote a complete $d$-dimensional Riemannian manifold with $\Omega$ a smoothly bounded domain with compact closure.  Let $X_t$ denote Brownian motion in $M$ with infinitesimal generator $\Delta$ and for $x\in M$ let ${\mathbb P}^x$ denote the probability measure charging Brownian paths beginning at $x.$  Let $\tau$ denote the first exit time from $\Omega$:
\[
\tau = \inf  \{t\geq 0: X_t \notin \Omega\}.
\]
For $n$ a natural number, let $T_n(\Omega)$ be defined as in (\ref{kthmoment}):
\[
T_n(\Omega) =  \int_{\Omega} {\mathbb E}^x[\tau^n] \,dV_g(x),
\]
where ${\mathbb E}^x$ denotes expectation with respect to ${\mathbb P}^x$ and $dV_g(x)$ denotes the volume form.  The invariants $T_n(\Omega)$ are closely related to the heat content of $\Omega,$ a function constructed from the solution of an initial value problem on the domain $\Omega.$  More precisely, the solution of the initial value problem  
\begin{eqnarray}
u_t & = &  \Delta u \hbox{ in } (0,\infty) \times \Omega, \label{ivp1} \\
\lim_{t\to 0} u(t,x) & = & 1 \hbox{ in } \Omega, \label{ivp2} \\
\lim_{x\to \sigma} u(t,x) & = & 0 \hbox{ for } t\in (0,\infty) \hbox{ and } \sigma \in \partial \Omega, \label{ivp3}
\end{eqnarray}
can be written as
\begin{equation}\label{heatsolution}
u(t,x) = {\mathbb P}^x (\tau >t).
\end{equation}

The \emph{heat content} of $\Omega$ is the function $H:(0,\infty) \to {\mathbb R}$ defined by 
\begin{equation*}\label{heatcontent}
H(t) = \int_\Omega u(t,x) \,dV_g(x).
\end{equation*}
Using (\ref{heatsolution}) we can express moments of the exit time in terms of $u(t,x)$: 
\begin{equation}\label{etimeexpression}
{\mathbb E}^x[\tau^n] = n \int_0^\infty t^{n-1} u(t,x) \,dt.
\end{equation}
Combining (\ref{etimeexpression}) with Fubini's Theorem, we see that we can express the invariant $T_n(\Omega)$ as a moment of the heat content:
\begin{equation*}\label{emoments}
T_n(\Omega) = n\int_0^\infty t^{n-1} H(t) \,dt.
\end{equation*}

To elucidate the relationship between the $L^1$-moment spectrum and the Dirichlet spectrum, we write the solution of the initial value problem (\ref{ivp1})-(\ref{ivp3}) in terms of the Dirichlet kernel.   Let $\textup{spec}(\Omega)$ denote the Dirichlet spectrum of $\Omega$ listed in increasing order with multiplicity and fix a corresponding orthonormal basis of eigenfunctions, $\{\phi_\lambda: \lambda \in \textup{spec}(\Omega)\}.$  Then the Dirichlet heat kernel for $\Omega$ is given by 
\begin{equation*}\label{heatkernel}
p(t,x,y) = \sum_{\lambda \in \textup{spec}(\Omega)}  \phi_\lambda(x)\phi_\lambda(y)e^{-\lambda t} .
\end{equation*}
The heat content of $\Omega$ is then given by 
\begin{equation}\label{heatcontent2}
H(t) = \sum_{\lambda \in \textup{spec}(\Omega)} \left(\int_\Omega \phi_\lambda(x) \,dV_g(x) \right)^2 e^{-\lambda t}.
\end{equation}
We can rewrite the sum occurring in (\ref{heatcontent2}) as follows:   given a Dirichlet eigenvalue $\lambda$ with corresponding eigenspace $E_\lambda,$ write 
\begin{equation}\label{vp2}
a_\lambda^2 =  \sum_{\stackrel{\hat{\lambda} \in \textup{spec}(\Omega)}{\hat{\lambda}=\lambda} } \left(\int_\Omega \phi_{\hat{\lambda}}(x) \,dV_g(x) \right)^2.
\end{equation}
Then $a_\lambda^2$ is the square of the $L^2$-norm of the orthogonal projection of the constant function $1$ on the eigenspace $E_\lambda.$  We define a set of real numbers, $\textup{spec}^*(\Omega),$ by 
\begin{equation}\label{spec*}
\textup{spec}^*(\Omega) = \{ \lambda \in \textup{spec}(\Omega): \ a_\lambda^2 >0\}.
\end{equation}
Using (\ref{heatcontent2})-(\ref{spec*}), we can rewrite the heat content as 
\begin{equation}\label{Eqn:HC}
H(t) =  \sum_{\nu \in \textup{spec}^*(\Omega)} a_\nu^2 e^{-\nu t}.
\end{equation}

As mentioned in the Introduction, the sequence $\{a_\nu^2\}_{\nu \in \textup{spec}^*(\Omega)}$ is closely related to the volume of the domain $\Omega.$  To see this is the case, note that there is a small time asymptotic expansion of $H(t)$: 
\[
H(t) \simeq \sum_{k=0}^\infty h_k t^{\frac{k}{2}}, 
\]
where the coefficients are local geometric invariants (see \cite{BG}).  In particular, it is known that $h_0 = \textup{Vol}_g(\Omega)$ and we conclude 
\begin{equation}\label{Eqn:vp2}
\textup{Vol}_g(\Omega) =   \sum_{\nu \in \textup{spec}^*(\Omega)} a_\nu^2,
\end{equation}
from which we see that the $a_\nu^2$ {\it partition the volume of $\Omega.$}  

The most direct method for connecting the moment spectrum to the Dirichlet spectrum involves the study of the Mellin transform of the heat content.  The Mellin transform of $H(t)$ takes the form of a Dirichlet series
\begin{equation}\label{Eqn:Zeta}
\zeta(s) =    \sum_{\nu \in \textup{spec}^*(\Omega)} a_\nu^2 \left(\frac{1}{\nu}\right)^s
\end{equation}
and extends meromorphically to the plane with poles at the negative half-integers (see \cite{MM}).  The connection between the $L^1$-moment spectrum, the heat content, and the Dirichlet spectrum is embedded in the identity 
\begin{equation}\label{Eqn:ZetaTn}
\Gamma(n+1) \zeta(n) = T_n(\Omega).
\end{equation}

To extract information from (\ref{Eqn:ZetaTn}) we use recursion and a convenient relationship between $T_n(\Omega)$ and a hierarchy of Poisson problems.  More precisely, if we write 
\[
u_n(x) = {\mathbb E}^x[\tau^n],
\]
then we can apply the Laplace operator to the right hand side of (\ref{etimeexpression}) and integrate by parts to see that $u_n$ satisfies 
\begin{equation}
-\Delta_g u_n=nu_{n-1} \quad \textup{in }\Omega,\qquad u_n=0\quad \textup{on }\partial \Omega.\label{Eqn:PoisHier}
\end{equation}
This hierarchy is very useful in establishing Theorem \ref{Thm:LnEst}.  

\subsection{Ricci bounds and symmetrization}\label{SubSect:Symm}

To establish Theorem \ref{Thm:MSI}, we require results involving symmetrization and isoperimetric inequalities.  Throughout this section we assume $(M,g)$ is a connected and closed Riemannian manifold (so in particular $(M,g)$ is complete by Hopf-Rinow). Denote the Ricci curvature of $M$ by $\textup{Ric}_M,$ and let  
\[
R_{min}=\inf \{\textup{Ric}_{M}(u,u):u\in T_pM, \langle u,u \rangle_p=1,p\in M\}.
\]
B\'erard, Besson, and Gallot showed in \cite{BBG} (see also \cite{B} and \cite{CGL}) that closed Riemannian manifolds with a lower Ricci curvature bound admit an isoperimetric inequality:




\begin{theorem}\label{Thm:IsoIneq} With $M$ as above, suppose the Ricci curvature on $M$ satisfies $R_{min}\geq (d-1)K$. Then there exists a $d$-dimensional sphere of radius $R$, denoted $\mathbb{S}^d(R)$, where for any smoothly bounded domain $\Omega$ in $M$, if $\Omega^{\ast}$ is a geodesic ball in $\mathbb{S}^d(R)$ satisfying
\begin{equation}\label{Eqn:VolNorm}
\frac{\textup{Vol}_g(\Omega)}{\textup{Vol}_g(M)}=\frac{\textup{Vol}_{g_0}(\Omega^{\ast})}{\textup{Vol}_{g_0}(\mathbb{S}^d(R))},
\end{equation}
then
\begin{equation}\label{Ineq:IsoIneq}
\frac{\textup{Surf}_g(\partial \Omega)}{\textup{Vol}_g(M)}\geq\frac{\textup{Surf}_{g_0}(\partial \Omega^{\ast})}{\textup{Vol}_{g_0}(\mathbb{S}^d(R))}.
\end{equation}
Here, $g_0$ denotes the canonical metric on $\mathbb{S}^d(R)$. Moreover, the radius $R$ depends only on $K$, the dimension $d$, and the diameter of $M$. Specifically, we have
\[
R=
\begin{cases}
\frac{1}{\sqrt{K}}\left( \frac{2\int_0^{\frac{\textup{diam}(\Omega)\sqrt{K}}{2}}\cos^{d-1}\theta\,d\theta}{\int_0^{\pi}\sin^{d-1}\theta\,d\theta}\right)^{\frac{1}{d}}& \textup{if }K>0,\\
\frac{\textup{diam}(\Omega)}{\left(1+d\int_0^{\pi}\sin^{d-1}\theta \,d\theta \right)^{\frac{1}{d}}-1}& \textup{if }K=0,\\
\frac{1}{\sqrt{-K}C(\textup{diam}(\Omega)\sqrt{-K})}& \textup{if }K<0,\\
\end{cases}
\]
where $C(z)$ denotes the unique positive solution $x$ of the equation
\[
x\int_0^{z}\left(\cosh t+x\sinh t\right)^{d-1}\,dt=\int_0^{\pi}\sin^{d-1}\theta \,d\theta.
\]
\end{theorem}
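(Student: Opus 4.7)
The plan is to follow the original Bérard-Besson-Gallot approach, which refines Gromov's proof of the Lévy-Gromov isoperimetric inequality by sharpening the comparison sphere using the diameter of $M$. The strategy is to compare the volumes of $r$-neighborhoods $\Omega_r = \{x \in M : d(x,\Omega) \leq r\}$ with the analogous neighborhoods of the geodesic ball $\Omega^* \subset \mathbb{S}^d(R)$, and then differentiate the resulting inequality at $r = 0^+$, recovering $\textup{Surf}_g(\partial \Omega)$ as the right derivative of $r \mapsto \textup{Vol}_g(\Omega_r)$ via the coarea formula.

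First I would invoke the generalized Bishop-Gromov volume comparison driven by $\textup{Ric}_M \geq (d-1)K$: along any unit-speed geodesic issued normally from $\partial \Omega$, the volume density in normal coordinates is controlled by the model function $s_K$, which equals $\sin(\sqrt{K}r)/\sqrt{K}$, $r$, or $\sinh(\sqrt{-K}r)/\sqrt{-K}$ depending on the sign of $K$. Coupled with the Heintze-Karcher inequality applied to tubes about $\partial \Omega$, this produces an upper bound of the schematic form
$$\textup{Vol}_g(\Omega_r \setminus \Omega) \leq \textup{Surf}_g(\partial \Omega)\int_0^r J_K(t)\,dt,$$
with $J_K$ determined by $K$, $d$, and the mean curvature profile along $\partial \Omega$. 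A parallel computation on the model sphere $\mathbb{S}^d(R)$ yields \emph{equality} for the geodesic ball $\Omega^*$, since geodesic balls on constant-curvature spaces saturate the tube estimate.

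Second, I would choose $R$ so that, after normalizing by $\textup{Vol}_g(M)$ and $\textup{Vol}_{g_0}(\mathbb{S}^d(R))$ respectively, the two tube bounds become directly comparable. The transcendental formulas listed in the theorem statement for $R$ arise exactly from imposing this compatibility at the maximal tube radius $\textup{diam}(M)$: the integrals $\int_0^{\pi}\sin^{d-1}\theta\,d\theta$ serve as total-capacity normalizations on the sphere, while $\int_0^{\textup{diam}(M)\sqrt{K}/2}\cos^{d-1}\theta\,d\theta$ and the implicit function $C(z)$ encode the capacity available in $M$ along a geodesic of length $\textup{diam}(M)$ in the positive and negative curvature regimes. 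Once $R$ is chosen in this way, the normalization \eqref{Eqn:VolNorm} transfers the tube comparison into the volume-normalized form needed for \eqref{Ineq:IsoIneq}.

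The main obstacle is the passage from a pointwise tube comparison to the sharp volume-normalized isoperimetric inequality. One must argue that the isoperimetric profile of $M$, rescaled by $\textup{Vol}_g(M)$, dominates the profile of $\mathbb{S}^d(R)$ uniformly, which requires a covering argument ruling out wasted tube capacity beyond $\textup{diam}(M)$ and a symmetrization step that replaces $\Omega$ by a set witnessing the profile. The three case distinctions $K>0$, $K=0$, $K<0$ reflect the distinct behavior of $s_K$: the positive-curvature case benefits from Myers' bound $\textup{diam}(M) \leq \pi/\sqrt{K}$, while the other two cases require the explicit diameter of $M$ to enter the model, leading to the different transcendental equations determining $R$.
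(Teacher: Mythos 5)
The paper does not prove this statement; it imports it as a cited background result from B\'erard--Besson--Gallot \cite{BBG} (with \cite{B} and \cite{CGL} listed as secondary expositions). So there is no ``paper's own proof'' to check your sketch against; what follows compares your sketch against the actual BBG argument.

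Your sketch correctly identifies the engine (a Gromov-style volume comparison for normal tubes, driven by $\textup{Ric}\geq(d-1)K$ and the Heintze--Karcher inequality, with the diameter entering as a cutoff for the normal geodesics), and your reading of the role of Myers's theorem and of the transcendental formulas for $R$ is on target. However, there is a genuine structural gap. The BBG/Gromov argument does not start from an arbitrary $\Omega$, compare $r$-neighborhoods, and differentiate at $r=0^+$. Instead, it takes an \emph{isoperimetric minimizer} $D$ with $\textup{Vol}(D)/\textup{Vol}(M)=\beta$, whose existence and almost-everywhere regularity are supplied by geometric measure theory and whose (reduced) boundary has \emph{constant mean curvature} $H$. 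The Heintze--Karcher comparison is then applied to the normal exponential map on both sides of $\partial D$, yielding
\[
\textup{Vol}(D)\leq \textup{Surf}(\partial D)\int_0^{t_+}h_+(t)\,dt,\qquad
\textup{Vol}(M\setminus D)\leq \textup{Surf}(\partial D)\int_0^{t_-}h_-(t)\,dt,
\]
with model Jacobians $h_\pm(t)=\bigl(\cos(\sqrt{K}t)\mp\tfrac{H}{\sqrt{K}}\sin(\sqrt{K}t)\bigr)_+^{d-1}$ (and hyperbolic/linear analogues for $K\leq0$), and with $t_++t_-\leq\textup{diam}(M)$. Optimizing over $H$ subject to this diameter constraint is precisely what produces the radius $R$ in the three cases. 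Your ``symmetrization step that replaces $\Omega$ by a set witnessing the profile'' is exactly where this GMT input belongs, but it is not a symmetrization and it is not optional: without the CMC minimizer you cannot run the tube comparison in both directions simultaneously, and for a general $\Omega$ the mean curvature of $\partial\Omega$ is not controlled, so the model Jacobian bound has no uniform form. Also, the surface area enters as the base of the tube estimate, not as a derivative of tube volume at $r=0$; the differentiation framing is a red herring here. To turn your sketch into a proof, replace the $r$-neighborhood/differentiation step with the existence and regularity of an isoperimetric minimizer, the constancy of its mean curvature, and the two-sided Heintze--Karcher estimate, then carry out the optimization in $H$ under the constraint $t_++t_-\leq\textup{diam}(M)$ to produce $R$.
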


If $K>0$, the isoperimetric inequality is sharp in the following sense: from Myers's Theorem \cite{M} and the explicit formula for $R$, we see $R\leq \frac{1}{\sqrt{K}}$. If $\Omega^{**}$ denotes a geodesic ball in the sphere $\mathbb S^d \left(\frac{1}{\sqrt{K}}\right)$ satisfying $\frac{\textup{Vol}_g(\Omega)}{\textup{Vol}_g(M)} = \frac{\textup{Vol}_{g_1}(\Omega^{**})}{\textup{Vol}_{g_1}\left(\mathbb{S}^d\left(\frac{1}{\sqrt{K}}\right)\right)}$, then
\begin{equation}\label{Ineq:IsoIneq2}
\frac{\textup{Surf}_g(\partial \Omega)}{\textup{Vol}_g(M)}\geq\frac{\textup{Surf}_{g_1}(\partial \Omega^{\ast \ast})}{\textup{Vol}_{g_1}\left(\mathbb{S}^d\left(\frac{1}{\sqrt{K}}\right)\right)},
\end{equation}
where $g_1$ denotes the canonical metric on $\mathbb S^d \left(\frac{1}{\sqrt{K}}\right)$. When equality holds in \eqref{Ineq:IsoIneq2}, Cheng's Theorem \cite{C} implies that $(M,g)$ is isometric to the sphere $\left(\mathbb{S}^d\left(\frac{1}{\sqrt{K}}\right),g_1\right)$ (and $\Omega$ is isometric to a geodesic ball). In what follows, we refer to this observation as \emph{the equality case} of Theorem \ref{Thm:IsoIneq}.

This isoperimetric inequality and the notion of spherical symmetrization play starring roles in the proofs of our comparison results. To define the latter notion, take $f\in L^1(\Omega)$ non-negative and define the \emph{distribution function of $f$} by
\[
\mu_f(t)=\textup{Vol}_g\left(\{x\in \Omega:f(x)>t\} \right),\quad t\in \mathbb{R}.
\]
The \emph{decreasing rearrangement} $f^{\#}:[0,\textup{Vol}_g(\Omega)]\to \mathbb{R}$ is then defined using the distribution function:
\[
f^{\#}(t)=
\begin{cases}
\underset{\Omega}{\textup{ess sup}}\,f & \textup{if }t=0,\\
\inf \{s:\mu_f(s)\leq t \} & \textup{if }t>0.
\end{cases}
\]
Finally, to define the \emph{spherical symmetrization} $f^{\ast}:\Omega^{\ast} \to \mathbb{R}$ we fix a pole $x_0\in \mathbb{S}^d(R)$ and use the decreasing rearrangement:
\begin{equation}\label{Eqn:DefDR}
f^{\ast}(x)=f^{\#}\left( \frac{\textup{Vol}_g(M)}{\textup{Vol}_{g_0}(\mathbb{S}^d(R))} \textup{Vol}_{g_0}\left(B(r)\right)\right),
\end{equation}
where $B(r)$ denotes the geodesic ball on $\mathbb{S}^d(R)$ centered at $x_0$ of radius $r=\textup{dist}_{g_0}(x,x_0)$.

The spherical symmetrization $f^{\ast}$ is a ``rearrangement'' of $f$ in the sense that
\begin{equation}\label{Eqn:RearrMean}
\frac{\mu_{f}(t)}{\textup{Vol}_g(\Omega)}=\frac{\mu_{f^{\ast}}(t)}{\textup{Vol}_{g_0}(\Omega^{\ast})}
\end{equation}
for each $t\in \mathbb{R}$. This equation essentially says that $f$ and $f^{\ast}$ have the same size. For instance, multiplying both sides by $pt^{p-1}$ and integrating from $0$ to $\underset{\Omega}{\textup{ess sup}}\,f$ yields
\begin{equation}\label{Eqn:ffstar}
\frac{1}{\textup{Vol}_g(\Omega)} \int_{\Omega}f^p\,dV_g=\frac{1}{\textup{Vol}_{g_0}(\Omega^{\ast})}\int_{\Omega^{\ast}}(f^{\ast})^p\,dV_{g_0},\quad 1\leq p<\infty.
\end{equation}

When the lower Ricci curvature bound satisfies $K>0$, we shall consider a second spherical symmetrization defined on $\mathbb{S}^d\left(\frac{1}{\sqrt{K}}\right)$. With $f\in L^1(\Omega)$ as above, we similarly fix a pole $x_1\in \mathbb{S}^d\left(\frac{1}{\sqrt{K}}\right)$ and define  
\begin{equation}\label{Eqn:DefDR2}
f^{\ast \ast}(x)=f^{\#}\left( \frac{\textup{Vol}_g(M)}{\textup{Vol}_{g_1}\left(\mathbb{S}^d\left(\frac{1}{\sqrt{K}}\right)\right)} \textup{Vol}_{g_1}\left(B(r)\right)\right),
\end{equation}
where $B(r)$ denotes the geodesic ball on $\mathbb{S}^d\left(\frac{1}{\sqrt{K}}\right)$ centered at $x_1$ of radius $r=\textup{dist}_{g_1}(x,x_1)$. Formulas analogous to \eqref{Eqn:RearrMean} and \eqref{Eqn:ffstar} also hold for the symmetrization $f^{\ast \ast}$.

Before proceeding to our main results, we pause to explain our consideration of two spheres in the case of positive Ricci curvature. Here, our comparison results for solutions to PDE (Theorem \ref{Thm:CR}), moment spectra (Theorem \ref{Thm:MSI}), and eigenvalues (Corollary \ref{Cor:FK}) compare geometric data on $M$ with geometric data on both $\mathbb{S}^d(R)$ and $\mathbb{S}^d\left(\frac{1}{\sqrt{K}}\right)$; the comparison with $\mathbb{S}^d(R)$ is always stronger. However, we include the comparison with $\mathbb{S}^d\left(\frac{1}{\sqrt{K}}\right)$ because there, we are able to handle sharp cases of equality.

\section{Main Results}
We start by collecting some basic facts about the moment spectrum on complete Riemannian manifolds.
\begin{theorem}\label{Th1}
Let $M$ be a complete Riemannian manifold and $\Omega\subseteq M$ a smoothly bounded precompact domain. Then the moment spectrum $T_n(\Omega)$ determines $\textup{spec}^{\ast}(\Omega)$ and the volume partition $\{a_{\nu}^2\}_{\nu \in \textup{spec}^{\ast}(\Omega)}.$  More precisely, for two successive elements $\nu_k,\nu_{k+1}\in \textup{spec}^{\ast}(\Omega)$, we have

\begin{equation*}
\frac{1}{\nu_k}=\lim_{n\to \infty} \left[\frac{T_n(\Omega)}{n!}-\sum_{\underset{\nu<\nu_k}{\nu \in \textup{spec}^{\ast}(\Omega)}} a_{\nu}^2 \left(\frac{1}{\nu}\right)^n\right]^{\frac{1}{n}}
\end{equation*}
and
\[
\frac{\nu_k}{\nu_{k+1}}=\lim_{n\to \infty} \left[ \nu_k^n\left(\frac{T_n(\Omega)}{n!}-\sum_{\underset{\nu<\nu_k}{\nu \in \textup{spec}^{\ast}(\Omega)}} a_{\nu}^2 \left(\frac{1}{\nu}\right)^n\right)-a_{\nu_k}\right]^{\frac{1}{n}}.
\]
In particular, since $\frac{\nu_k}{\nu_{k+1}}<1$, we have
\[
a_{\nu_k}=\lim_{n\to \infty} \nu_k^n \left(\frac{T_n(\Omega)}{n!}-\sum_{\underset{\nu<\nu_k}{\nu \in \textup{spec}^{\ast}(\Omega)}} a_{\nu}^2 \left(\frac{1}{\nu}\right)^n\right).
\]

\end{theorem}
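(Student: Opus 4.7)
The plan is to read off everything from the identity $T_n(\Omega)/n! = \zeta(n) = \sum_{\nu \in \textup{spec}^*(\Omega)} a_\nu^2\,\nu^{-n}$, which is exactly the content of \eqref{Eqn:Zeta} together with \eqref{Eqn:ZetaTn}. Enumerate $\textup{spec}^*(\Omega) = \{\nu_1 < \nu_2 < \nu_3 < \cdots\}$. First, I would note that the Dirichlet series for $\zeta(s)$ converges absolutely on some right half-plane $\textup{Re}(s) > s_0$ (this is standard: Weyl's law gives polynomial growth of $\nu$, and the $a_\nu^2$ partition the finite volume by \eqref{Eqn:vp2}, so eventually $\sum a_\nu^2 \nu^{-s}$ converges), so all manipulations below are justified for $n$ large enough by dominated convergence.

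Next I would subtract the partial sum corresponding to $\nu < \nu_k$ and isolate the leading term, writing
\begin{equation*}
\frac{T_n(\Omega)}{n!} - \sum_{\underset{\nu<\nu_k}{\nu\in \textup{spec}^*(\Omega)}} a_\nu^2 \Bigl(\tfrac{1}{\nu}\Bigr)^n
= \Bigl(\tfrac{1}{\nu_k}\Bigr)^n \Bigl[a_{\nu_k}^2 + \sum_{\underset{\nu>\nu_k}{\nu\in\textup{spec}^*(\Omega)}} a_\nu^2\Bigl(\tfrac{\nu_k}{\nu}\Bigr)^n \Bigr].
\end{equation*}
Since $\nu_k/\nu < 1$ for each $\nu > \nu_k$ and the tail is dominated by $\sum_{\nu>\nu_k} a_\nu^2 (\nu_k/\nu)^{n_0}$ for any fixed large $n_0$, dominated convergence forces the bracketed factor to converge to $a_{\nu_k}^2$ as $n\to\infty$. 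Multiplying both sides by $\nu_k^n$ gives the third displayed limit (modulo the apparent typo $a_{\nu_k}$ vs.\ $a_{\nu_k}^2$). Taking $n$-th roots instead — and using that $a_{\nu_k}^2 > 0$ by the definition of $\textup{spec}^*(\Omega)$, so $(a_{\nu_k}^2 + o(1))^{1/n} \to 1$ — yields the first displayed limit $1/\nu_k$.

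For the middle formula, I would subtract $a_{\nu_k}^2$ (what the expression $a_{\nu_k}$ should read) from the previous identity, factor out $(\nu_k/\nu_{k+1})^n$, and repeat the same computation one level deeper:
\begin{equation*}
\nu_k^n \Bigl[\frac{T_n(\Omega)}{n!} - \sum_{\nu<\nu_k} a_\nu^2 \Bigl(\tfrac{1}{\nu}\Bigr)^n\Bigr] - a_{\nu_k}^2
= \Bigl(\tfrac{\nu_k}{\nu_{k+1}}\Bigr)^n \Bigl[a_{\nu_{k+1}}^2 + \sum_{\nu>\nu_{k+1}} a_\nu^2 \Bigl(\tfrac{\nu_{k+1}}{\nu}\Bigr)^n\Bigr].
\end{equation*}
The same dominated convergence argument forces the bracket to tend to $a_{\nu_{k+1}}^2 > 0$, so taking $n$-th roots gives $\nu_k/\nu_{k+1}$. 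Once the first two limits are established, $\textup{spec}^*(\Omega)$ and the partition $\{a_\nu^2\}$ can be reconstructed recursively from the data $\{T_n(\Omega)\}$: $\nu_1$ from the first formula applied with $k=1$ (empty sum), $a_{\nu_1}^2$ from the third, and then $\nu_2, a_{\nu_2}^2$ from the $k=2$ versions, and so on.

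The only real obstacle is justifying the interchange of limits inside the tail sums, which requires the absolute convergence of $\zeta(n)$ for $n$ large enough; this follows from Weyl's law together with the volume identity \eqref{Eqn:vp2}, and no other input is needed. Everything else is an elementary factoring of a Dirichlet series by its leading term.
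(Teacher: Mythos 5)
Your proof is correct and takes essentially the same approach as the paper: both exploit the Dirichlet series identity $T_n(\Omega)/n! = \sum_\nu a_\nu^2 \nu^{-n}$, factor out the leading term $(1/\nu_k)^n$, and show the tail decays. One small remark: you do not actually need Weyl's law anywhere. Since $\sum_\nu a_\nu^2 = \textup{Vol}_g(\Omega) < \infty$ by \eqref{Eqn:vp2} and $(\nu_k/\nu)^n \leq 1$ is monotone decreasing in $n$ for $\nu > \nu_k$, dominated convergence applies directly with dominating sequence $a_\nu^2$; the paper's sandwich bound in terms of $\sum_\nu \nu^{-n}$ (which does implicitly rely on eigenvalue growth) can be bypassed entirely. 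You also correctly flag the typographical slip $a_{\nu_k}$ for $a_{\nu_k}^2$ in the statement.
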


\begin{proof}
By \eqref{Eqn:vp2} the $a_{\nu}^2$ are bounded, say $a_{\nu}^2\leq N$ for $\nu \in \textup{spec}^{\ast}(\Omega)$. We therefore have
\begin{equation}\label{Eqn:bounds}
a_{\nu_1}^{\frac{2}{n}} \frac{1}{\nu_1} \leq \left(\sum_{\nu \in \textup{spec}^{\ast}(\Omega)}a_{\nu}^2 \left(\frac{1}{\nu}\right)^n\right)^{\frac{1}{n}} \leq N^{\frac{2}{n}}\left(\sum_{\nu \in \textup{spec}^{\ast}(\Omega)}  \left(\frac{1}{\nu}\right)^n\right)^\frac{1}{n}.
\end{equation}
Since
\[
\lim_{n\to \infty}\left(\sum_{\nu \in \textup{spec}^{\ast}(\Omega)}  \left(\frac{1}{\nu}\right)^n\right)^\frac{1}{n}=\sup_{\nu \in \textup{spec}^{\ast}(\Omega)} \frac{1}{\nu}=\frac{1}{\nu_1},
\]
letting $n\to \infty$ in \eqref{Eqn:bounds} and combining with \eqref{Eqn:Zeta} and \eqref{Eqn:ZetaTn} yields 
\[
\lim_{n\to \infty}\left(\frac{T_n(\Omega)}{n!}\right)^{\frac{1}{n}}=\frac{1}{\nu_1}.
\]
We conclude that $\nu_1$ is determined by the moment spectrum, and so too is
\[
\nu_1^n\zeta(n)=\frac{\nu_1^nT_n(\Omega)}{n!}=a_{\nu_1}^2+\sum_{\underset{\nu>\nu_1}{\nu \in \textup{spec}^{\ast}(\Omega)}} a_{\nu}^2 \left(\frac{\nu_1}{\nu}\right)^n.
\]
Arguing as above, we conclude
\[
\lim_{n\to \infty}\left( \frac{\nu_1^nT_n(\Omega)}{n!}-a_{\nu_1}^2\right)^{\frac{1}{n}}=\frac{\nu_1}{\nu_2}.
\]
Since $\frac{\nu_1}{\nu_2}<1$, we deduce
\[
\lim_{n\to \infty} \frac{\nu_1^nT_n(\Omega)}{n!}=a_{\nu_1}^2,
\]
showing that $a_{\nu_1}^2$ is determined by the moment spectrum. Having established that both $\nu_1$ and $a_{\nu_1}^2$ are determined by the moment spectrum, the same holds true for
\[
\zeta(n)-a_{\nu_1}^2\left(\frac{1}{\nu_1}\right)^n= \sum_{\underset{\nu>\nu_1}{\nu \in \textup{spec}^{\ast}(\Omega)}} a_{\nu}^2 \left(\frac{1}{\nu}\right)^n.
\]
Arguing exactly as above, we deduce that
\[
\lim_{n\to \infty} \left[\frac{T_n(\Omega)}{n!}- a_{\nu_1}^2 \left(\frac{1}{\nu_1}\right)^n\right]^{\frac{1}{n}}=\frac{1}{\nu_2}.
\]
We likewise deduce
\[
\lim_{n\to \infty} \left[\nu_2^n\left(\frac{T_n(\Omega)}{n!}-a_{\nu_1}^2\left( \frac{1}{\nu_1}\right)^n\right)-a_{\nu_2}^2\right]^{\frac{1}{n}}=\frac{\nu_2}{\nu_3}
\]
and
\[
\lim_{n\to \infty} \nu_2^n\left(\frac{T_n(\Omega)}{n!}-a_{\nu_1}^2 \left(\frac{1}{\nu_1}\right)^n\right)=a_{\nu_2}^2.
\]
We conclude that $\nu_2$ and $a_{\nu_2}^2$ are determined by the moment spectrum. The general claims and formulas follow by iterating this argument.
\end{proof}

An immediate consequence of Theorem \ref{Th1} is (see also \cite{MM}):

\begin{corollary}
The moment spectrum determines heat content.
\end{corollary}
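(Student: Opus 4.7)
The plan is to observe that this corollary is an immediate consequence of Theorem \ref{Th1} combined with the spectral expression \eqref{Eqn:HC} for the heat content. Specifically, recall that
\[
H(t) = \sum_{\nu \in \textup{spec}^{\ast}(\Omega)} a_{\nu}^2 e^{-\nu t}.
\]
The right-hand side depends only on the set $\textup{spec}^{\ast}(\Omega)$ and the associated volume partition $\{a_{\nu}^2\}_{\nu \in \textup{spec}^{\ast}(\Omega)}$, so if both of these objects are determined by the sequence $\{T_n(\Omega)\}_{n\in \mathbb{N}}$, then so is $H(t)$.

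First I would invoke Theorem \ref{Th1}, which produces explicit limit formulas expressing each successive eigenvalue $\nu_k \in \textup{spec}^{\ast}(\Omega)$ and each coefficient $a_{\nu_k}^2$ in terms of the moment sequence $\{T_n(\Omega)\}$. Applying these formulas inductively recovers the entire labeled set $\{(\nu, a_\nu^2) : \nu \in \textup{spec}^{\ast}(\Omega)\}$ from the moment spectrum alone. Substituting these data into \eqref{Eqn:HC} then yields $H(t)$ for every $t > 0$, completing the proof.

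There is essentially no obstacle here: the argument is two lines once Theorem \ref{Th1} is in hand. The only point worth noting is that no convergence issue arises, since the series defining $H(t)$ converges absolutely for all $t > 0$ by the decay of $e^{-\nu t}$ together with the bound $a_{\nu}^2 \leq \textup{Vol}_g(\Omega)$ coming from \eqref{Eqn:vp2}.
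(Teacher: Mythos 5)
Your argument is correct and matches the paper's proof exactly: both invoke Theorem \ref{Th1} to recover $\textup{spec}^{\ast}(\Omega)$ and the volume partition $\{a_{\nu}^2\}$ from the moment spectrum, then read off $H(t)$ from \eqref{Eqn:HC}. The paper states this even more tersely (``This follows immediately from \eqref{Eqn:HC}''), so your remarks on absolute convergence are a small, harmless elaboration.
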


\begin{proof}
This follows immediately from \eqref{Eqn:HC}.
\end{proof}

We next establish the estimate of Theorem \ref{Thm:LnEst}:

\begin{proof}[Proof of Theorem \ref{Thm:LnEst}.]
Replacing $\lambda_n$ by the lowest equivalent eigenvalue, we may assume $\lambda_{n-1}<\lambda_n$. Let $v_1,v_2,\ldots, v_{n-1}$ denote a corresponding set of orthonormal eigenfunctions for the eigenvalues $\lambda_1,\lambda_2,\ldots,\lambda_{n-1}$. Define
\[
u=u_k-\sum_{j=1}^{n-1}(u_k,v_j)v_j,
\]
where $u_k$ solves \eqref{Eqn:PoisHier} and $(\cdot,\cdot)$ denotes the standard inner product on $L^2(\Omega)$. Using $u$ as a trial function in the Rayleigh quotient for $\lambda_n$, we deduce
\begin{equation}\label{Ineq:LNRQ}
\lambda_n \leq \frac{\displaystyle \int_{\Omega}|\nabla u|^2\,dV_g}{\displaystyle \int_{\Omega}u^2\,dV_g}=\frac{\displaystyle \int_{\Omega}|\nabla u_k|^2\,dV_g-\displaystyle \sum_{j=1}^{n-1}\lambda_j(u_k,v_j)^2}{\displaystyle \int_{\Omega}u_k^2\,dV_g-\displaystyle \sum_{j=1}^{n-1}(u_k,v_j)^2}.
\end{equation}
To simplify the numerator, we integrate by parts:
\begin{align*}
\int_{\Omega}|\nabla u_k|^2\,dV_g&=-\int_{\Omega}u_k\Delta u_k\,dV_g\\
&=k\int_{\Omega}u_ku_{k-1}\,dV_g\\
&=-\frac{k}{k+1}\int_{\Omega}\Delta u_{k+1}u_{k-1}\,dV_g\\
&=-\frac{k}{k+1}\int_{\Omega} u_{k+1}\Delta u_{k-1}\,dV_g\\
&=\frac{k(k-1)}{k+1}\int_{\Omega}u_{k+1}u_{k-2}\,dV_g.
\end{align*}
Iterating this process, we see
\begin{equation}\label{Eqn:NFT}
\int_{\Omega}|\nabla u_k|^2\,dV_g=\frac{(k!)^2}{(2k-1)!}T_{2k-1}(\Omega).
\end{equation}
To further simplify \eqref{Ineq:LNRQ}, we compute
\begin{align*}
\int_{\Omega}u_kv_j\,dV_g&=-\frac{1}{\lambda_j}\int_{\Omega}u_k\Delta v_j\,dV_g\\
&=-\frac{1}{\lambda_j}\int_{\Omega}\Delta u_k v_j\,dV_g\\
&=\frac{k}{\lambda_j}\int_{\Omega}u_{k-1}v_j\,dV_g.\\
\end{align*}
Iterating this argument gives
\begin{equation}\label{Eqn:SNE}
\int_{\Omega}u_kv_j\,dV_g=\frac{k!}{\lambda_j^k}\int_{\Omega}v_j\,dV_g.
\end{equation}
Fix an eigenvalue $\nu$ from $\lambda_1,\lambda_2,\ldots ,\lambda_{n-1}$ and let $\textup{proj}\,_{E_{\nu}}1$ denote the orthogonal projection of the constant function $1$ onto the eigenspace $E_{\nu}$. We then have
\[
\textup{proj}\,_{E_{\nu}}1=\underset{\lambda_j=\nu}{\sum_{j=1}^{n-1}}(1,v_j)v_j.
\]
It follows from \eqref{Eqn:SNE} that
\begin{align}
\underset{\lambda_j=\nu}{\sum_{j=1}^{n-1}}(u_k,v_j)^2&=\left(\frac{k!}{\nu^k}\right)^2 \underset{\lambda_j=\nu}{\sum_{j=1}^{n-1}}(1,v_j)^2 \nonumber\\
&=\left(\frac{k!}{\nu^k}\right)^2\int_{\Omega}\left(\textup{proj}\,_{E_{\nu}}1 \right)^2\,dV_g.\label{Eqn:alambd}
\end{align}
We finally simplify the remaining term in the denominator of \eqref{Ineq:LNRQ}:
\begin{align*}
\int_{\Omega}u_k^2\,dV_g&=-\frac{1}{k+1}\int_{\Omega}\Delta u_{k+1}u_k\,dV_g\\
&=-\frac{1}{k+1}\int_{\Omega} u_{k+1}\Delta u_k\,dV_g\\
&=\frac{k}{k+1}\int_{\Omega}u_{k+1}u_{k-1}\,dV_g,
\end{align*}
and repeated application of this argument yields
\begin{equation}\label{Eqn:TEN}
\int_{\Omega}u_k^2\,dV_g=\frac{(k!)^2}{(2k)!}T_{2k}(\Omega).
\end{equation}
The claimed inequality \eqref{Ineq:LambdanEst} follows by using \eqref{Eqn:NFT}, \eqref{Eqn:alambd}, and \eqref{Eqn:TEN} in \eqref{Ineq:LNRQ} and using the definition of $a_{\nu}^2$.

To establish the equality claim, assume $\lambda_n=\nu_m\in \textup{spec}^{\ast}(\Omega)$. We use \eqref{Eqn:Zeta} and \eqref{Eqn:ZetaTn}, keeping only the first term of the denominator to estimate
\begin{align}
\frac{ \displaystyle \frac{T_{2k-1}(\Omega)}{(2k-1)!} - \sum_{\underset{\nu <\lambda_n}{\nu \in \textup{spec}^{\ast}(\Omega)}} a_{\nu}^2\left(\frac{1}{\nu}\right)^{2k-1}}{\displaystyle \frac{T_{2k}(\Omega)}{(2k)!} -  \sum_{\underset{\nu <\lambda_n}{\nu \in \textup{spec}^{\ast}(\Omega)}} a_{\nu}^2\left(\frac{1}{\nu}\right)^{2k}}
&= \frac{\displaystyle \sum_{\underset{\nu \geq \lambda_n}{\nu \in \textup{spec}^{\ast}(\Omega)}} a_{\nu}^2\left(\frac{1}{\nu}\right)^{2k-1}}{\displaystyle \sum_{\underset{\nu \geq \lambda_n}{\nu \in \textup{spec}^{\ast}(\Omega)}} a_{\nu}^2\left(\frac{1}{\nu}\right)^{2k}} \nonumber \\
& \leq \lambda_n\frac{\displaystyle \sum_{\underset{\nu \geq \lambda_n}{\nu \in \textup{spec}^{\ast}(\Omega)}} a_{\nu}^2\left(\frac{1}{\nu}\right)^{2k-1}}{a_{\lambda_n}^2\left(\frac{1}{\lambda_n}\right)^{2k-1}}\label{Ineq:FstLnEst}.
\end{align}
We further estimate
\begin{align}
\lambda_n\frac{\displaystyle \sum_{\underset{\nu \geq \lambda_n}{\nu \in \textup{spec}^{\ast}(\Omega)}} a_{\nu}^2\left(\frac{1}{\nu}\right)^{2k-1}}{a_{\lambda_n}^2\left(\frac{1}{\lambda_n}\right)^{2k-1}} &=  \lambda_n \left(1+\frac{1}{a_{\lambda_n}^2}\sum_{\underset{\nu > \lambda_n}{\nu \in \textup{spec}^{\ast}(\Omega)}} a_{\nu}^2\left(\frac{\lambda_n}{\nu}\right)^{2k-1} \right) \nonumber \\
&\leq \lambda_n \left(1+\frac{1}{a_{\lambda_n}^2}\left(\frac{\lambda_n}{\nu_{m+1}}\right)^{2k-1}\sum_{\underset{\nu > \lambda_n}{\nu \in \textup{spec}^{\ast}(\Omega)}} a_{\nu}^2 \right)\nonumber \\
&\leq \lambda_n \left(1+\frac{\textup{Vol}_g(\Omega)}{a_{\lambda_n}^2}\left(\frac{\lambda_n}{\nu_{m+1}}\right)^{2k-1}\right), \label{Ineq:FinLnEst}
\end{align}
where the last inequality follows from \eqref{Eqn:vp2}. Letting $k\to \infty$ and combining \eqref{Ineq:LambdanEst}, \eqref{Ineq:FstLnEst}, and \eqref{Ineq:FinLnEst} gives the result.

\end{proof}

The techniques used to prove Theorem \ref{Thm:LnEst} can also be used to establish estimates for the moment spectrum in terms of the manifold's Cheeger constant. Recall that for compact manifolds $M$, the  \emph{Cheeger constant} $C$ is defined by
\[
C=\inf_{\Omega}\frac{\textup{Surf}_g(\partial \Omega)}{\min \{\textup{Vol}_g(\Omega),\textup{Vol}_g(M\setminus \Omega)\}},
\]
where the $\inf$ ranges over all smoothly bounded domains in $M$.

For the next result (and the remainder of the paper), we shall make use of the following shorthand notation for functions $u:\Omega \to \mathbb{R}$:
\begin{align*}
\{u>t\}&=\{x\in \Omega:u(x)>t\},\\
\{u=t\}&=\{x\in \Omega:u(x)=t\}.
\end{align*}
We have the following estimate:
\begin{theorem}\label{Thm:CSE}
Let $M$ be a connected compact Riemannian manifold with $\Omega\subseteq M$ a smoothly bounded domain. If $\textup{Vol}_g(\Omega)\leq \frac{1}{2}\textup{Vol}_g(M)$, then
\[
C^2\leq \textup{Vol}_g(\Omega)
\frac{(k!)^2}{(2k-1)!}\frac{T_{2k-1}(\Omega)}{T_k(\Omega)^2}.
\]
\end{theorem}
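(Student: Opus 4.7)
The plan is to recognize that the right-hand side of the inequality can be recast in a very suggestive form using the identities already established in the proof of Theorem \ref{Thm:LnEst}. Namely, by \eqref{Eqn:NFT} we have $\int_\Omega |\nabla u_k|^2\, dV_g = \frac{(k!)^2}{(2k-1)!}T_{2k-1}(\Omega)$, and of course $\int_\Omega u_k\, dV_g = T_k(\Omega)$. So the desired inequality is equivalent to
\[
C^2 \, T_k(\Omega)^2 \leq \textup{Vol}_g(\Omega)\int_\Omega |\nabla u_k|^2\, dV_g.
\]
This now looks like a combination of a Cheeger-type $L^1$ gradient bound and Cauchy--Schwarz, which is how I would proceed.

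First I would invoke Cauchy--Schwarz to get
\[
\Bigl(\int_\Omega |\nabla u_k|\, dV_g\Bigr)^{\!2} \leq \textup{Vol}_g(\Omega) \int_\Omega |\nabla u_k|^2\, dV_g,
\]
which takes care of the volume factor on the right. It then suffices to prove that $\int_\Omega |\nabla u_k|\, dV_g \geq C\, T_k(\Omega)$. This is where the Cheeger constant enters, via the coarea formula applied to $u_k$ (extended by zero to $M$). By the strong maximum principle $u_k > 0$ on $\Omega$, and by Sard's theorem the superlevel sets $\{u_k > t\}$ are smoothly bounded domains in $M$ for almost every $t > 0$. The coarea formula gives
\[
\int_\Omega |\nabla u_k|\, dV_g = \int_0^\infty \textup{Surf}_g(\partial \{u_k > t\})\, dt.
\]
Because $\{u_k > t\} \subseteq \Omega$ and $\textup{Vol}_g(\Omega) \leq \tfrac{1}{2}\textup{Vol}_g(M)$, we have $\min\{\textup{Vol}_g(\{u_k > t\}), \textup{Vol}_g(M\setminus\{u_k > t\})\} = \textup{Vol}_g(\{u_k > t\})$, so the definition of $C$ yields
\[
\textup{Surf}_g(\partial \{u_k > t\}) \geq C\, \textup{Vol}_g(\{u_k > t\}).
\]
Integrating in $t$ and using the layer-cake formula $\int_0^\infty \textup{Vol}_g(\{u_k > t\})\, dt = \int_\Omega u_k\, dV_g = T_k(\Omega)$ gives $\int_\Omega |\nabla u_k|\, dV_g \geq C\, T_k(\Omega)$, as desired. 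Squaring, chaining with Cauchy--Schwarz, and using \eqref{Eqn:NFT} produces the stated inequality.

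The only potentially delicate point is the application of the Cheeger inequality to the level sets $\{u_k > t\}$: the Cheeger constant is defined via an infimum over smoothly bounded domains, but one needs it for the (a.e. smoothly bounded by Sard) superlevel sets. For almost every $t > 0$ the boundary $\{u_k = t\}$ is smooth, so the comparison $\textup{Surf}_g(\partial\{u_k > t\}) \geq C\,\textup{Vol}_g(\{u_k > t\})$ applies directly on a full-measure set of $t$, and this is enough for the coarea integral. The remaining steps are routine.
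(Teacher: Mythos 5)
Your proof is correct and follows essentially the same path as the paper's: layer-cake for $T_k(\Omega)$, the Cheeger inequality applied to the superlevel sets $\{u_k>t\}$ (justified by Sard and the volume hypothesis), the coarea formula to convert the surface-area integral into $\int_\Omega |\nabla u_k|\,dV_g$, Cauchy--Schwarz, and then \eqref{Eqn:NFT}. The only difference is cosmetic: you work backwards from the target inequality while the paper chains the estimates forward, but the ingredients and their roles are identical.
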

\begin{proof}
Our argument follows \cite{CGL}. By definition, with $u_k$ as in \eqref{Eqn:PoisHier},
\begin{equation}\label{Eqn:TkR}
T_k(\Omega)=\int_{\Omega}u_k\,dV_g =\int_0^{\infty}\mu_{u_k}(t)\,dt.
\end{equation}
From our assumption on the volume of $\Omega$, we see $\mu_{u_k}(t)\leq \frac{1}{2}\textup{Vol}_g(M)$. Moreover, because $u_k$ is smooth, it follows from Sard's Theorem that
\[
\partial \{u_k>t\}=\{u_k=t\}
\]
for almost every $t\geq 0$. For such $t$, it follows that
\[
\textup{Surf}_g(\{u_k=t\})\geq C\mu_{u_k}(t),
\]
where $C$ is the Cheeger constant. Invoking the coarea formula and Cauchy-Schwarz, \eqref{Eqn:TkR} becomes
\begin{align}
T_k(\Omega)&\leq \frac{1}{C}\int_0^{\infty} \textup{Surf}_g(\{u_k=t\})\,dt \nonumber \\
&=\frac{1}{C}\int_{\Omega}|\nabla u_k|\,dV_g \nonumber \\
&\leq \frac{\textup{Vol}_g(\Omega)^{\frac{1}{2}}}{C}\left(\int_{\Omega}|\nabla u_k|^2\,dV_g\right)^\frac{1}{2}.\label{Ineq:TkEst}
\end{align}
The argument used in the proof of Theorem  \ref{Thm:LnEst} gives
\[
\int_{\Omega}|\nabla u_k|^2\,dV_g=\frac{(k!)^2}{(2k-1)!}T_{2k-1}(\Omega).
\]
Substituting this equality into \eqref{Ineq:TkEst} gives the result.
\end{proof}

We now turn our attention to estimates that involve lower Ricci curvature bounds. Before proceeding, the reader may find it useful to review the definitions and notation introduced in Section \ref{SubSect:Symm}. Our first result is the following PDE comparison principle:

\begin{theorem}\label{Thm:CR}
Let $M$, $\Omega$, $\mathbb{S}^d(R)$, $\Omega^{\ast}$, $\mathbb{S}^d\left(\frac{1}{\sqrt{K}}\right)$, and $\Omega^{**}$ be as in Theorem \ref{Thm:MSI}. Let $f\geq 0$ be a continuous function on $\Omega$ and assume $u$ and $v$ are smooth solutions of the Poisson problems
\[
-\Delta_g u=f\quad \textup{in }\Omega,\qquad u=0\quad \textup{on }\partial \Omega,
\]
and
\[
-\Delta_{g_0} v=f^{\ast} \quad \textup{in }\Omega^{\ast},\qquad v=0\quad \textup{on }\partial \Omega^{\ast},
\]
where $f^{\ast}$ denotes the spherical symmetrization of $f$ as defined by \eqref{Eqn:DefDR}. Then $u^{\ast}\leq v$ in $\Omega^{\ast}$.

Moreover, if $K>0$ and $w$ is a smooth solution to the Poisson problem
\[
-\Delta_{g_1} w=f^{\ast \ast} \quad \textup{in }\Omega^{\ast \ast},\qquad w=0\quad \textup{on }\partial \Omega^{\ast \ast},
\]
with $f^{\ast \ast}$ the spherical symmetrization of $f$ defined by \eqref{Eqn:DefDR2}, then $u^{\ast \ast}\leq w$. If $u^{\ast \ast}= w$, then $M$ is isometric to the sphere $\mathbb{S}^d\left(\frac{1}{\sqrt{K}}\right)$ and $\Omega$ is isometric to an appropriate geodesic ball in $\mathbb{S}^d\left(\frac{1}{\sqrt{K}}\right)$.

\end{theorem}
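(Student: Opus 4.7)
The plan is to adapt Talenti's classical symmetrization argument to the Riemannian setting, using the isoperimetric inequality of B\'erard-Besson-Gallot (Theorem~\ref{Thm:IsoIneq}) in place of the Euclidean isoperimetric inequality, and to run it first for the weaker sphere $\mathbb{S}^d(R)$ and then again for $\mathbb{S}^d(1/\sqrt{K})$ when $K>0$.

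First I would analyze the level sets of $u$. By Sard, for almost every $t$ the set $\{u=t\}$ is a smooth hypersurface; setting $\mu(t)=\textup{Vol}_g(\{u>t\})$, the coarea formula gives $-\mu'(t)=\int_{\{u=t\}}|\nabla u|^{-1}\,dS$, and the divergence theorem applied to $-\Delta u=f$ yields $\int_{\{u=t\}}|\nabla u|\,dS=\int_{\{u>t\}}f\,dV_g$. Cauchy--Schwarz combines these into
\[
\textup{Surf}_g(\{u=t\})^2\le -\mu'(t)\int_{\{u>t\}}f\,dV_g.
\]
Next I apply the BBG inequality to the superlevel set $\{u>t\}$, bounding its perimeter below by that of the geodesic ball $B(r(t))\subseteq\mathbb{S}^d(R)$ whose normalized volume matches, and I apply the Hardy--Littlewood rearrangement inequality together with the defining change of variables for $f^{\ast}$ (equation \eqref{Eqn:DefDR}) to bound the right-hand integral:
\[
\int_{\{u>t\}}f\,dV_g\le \frac{\textup{Vol}_g(M)}{\textup{Vol}_{g_0}(\mathbb{S}^d(R))}\int_{B(r(t))}f^{\ast}\,dV_{g_0}.
\]

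After canceling the normalization factors, this reduces to a radial differential inequality for $u^{\ast}$. Writing $U(r)=u^{\ast}$ on $\Omega^{\ast}$ and letting $A(r)$ denote the perimeter of $B(r)\subseteq\mathbb{S}^d(R)$, the previous step becomes
\[
|U'(r)|\le \frac{1}{A(r)}\int_{B(r)}f^{\ast}\,dV_{g_0}.
\]
Because $v$ is itself radial with spherical level sets, Cauchy--Schwarz, the isoperimetric step, and the rearrangement step all become equalities when applied to $v$; writing $V(r)=v$ and integrating $-\Delta_{g_0}v=f^{\ast}$ over $B(r)$ gives the matching identity $|V'(r)|=A(r)^{-1}\int_{B(r)}f^{\ast}\,dV_{g_0}$. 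Hence $|U'(r)|\le|V'(r)|$, and since $U(r)$ and $V(r)$ both vanish at the outer boundary radius and both decrease in $r$, integrating inward yields $u^{\ast}\le v$ on $\Omega^{\ast}$. The same argument, using \eqref{Ineq:IsoIneq2} in place of the BBG inequality, proves $u^{\ast\ast}\le w$ on $\Omega^{\ast\ast}$ when $K>0$.

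Finally, for the rigidity statement, I assume $u^{\ast\ast}=w$ and trace back through the chain. Equality in the comparison forces equality in the radial ODE for a.e.\ $r$, which forces equality in each of the Cauchy--Schwarz step, the BBG isoperimetric step, and the Hardy--Littlewood step for (almost) every superlevel set $\{u>t\}$. In particular, \eqref{Ineq:IsoIneq2} holds as an equality for a dense family of superlevel sets of $u$, and the equality case of Theorem~\ref{Thm:IsoIneq} (via Cheng's theorem, as recorded after that statement) then yields that $(M,g)$ is isometric to $\mathbb{S}^d(1/\sqrt{K})$ and $\Omega$ to a geodesic ball.

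The main obstacle I anticipate is the bookkeeping in Step~3 and the rigidity step. The two different normalizations (absolute volumes on $M$ versus normalized volumes used to define $f^{\ast}$, $\Omega^{\ast}$, and the BBG inequality) must be tracked carefully so that the factors $\textup{Vol}_g(M)/\textup{Vol}_{g_0}(\mathbb{S}^d(R))$ cancel cleanly; and the equality case requires verifying that equality in the radial ODE propagates all the way back to equality in the BBG inequality for enough superlevel sets to invoke Cheng's rigidity, rather than only at isolated values of $t$.
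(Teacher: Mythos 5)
Your approach is essentially the paper's: a Talenti-style symmetrization argument built on the coarea formula, Cauchy--Schwarz, the divergence theorem, the Hardy--Littlewood rearrangement inequality, and the B\'erard--Besson--Gallot isoperimetric inequality, run once against $\mathbb{S}^d(R)$ and again against $\mathbb{S}^d(1/\sqrt{K})$ when $K>0$. The chain of inequalities you write down is the one in the paper, and the normalization factors do cancel exactly as you hoped.

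The paper organizes the conclusion slightly differently and, in doing so, sidesteps a regularity issue that your formulation glosses over. Rather than integrating a pointwise differential inequality $|U'(r)|\le|V'(r)|$, the paper defines a value-reparametrization $r(t)$ by the set identity $\{u>t\}^{\ast}=\{v>r(t)\}$, shows $r'(t)\ge 1$ by the same chain, deduces $r(t)\ge t$, and from this compares the distribution functions $\mu_u$ and $\mu_v$ directly. Your version requires $U(r)=u^{\ast}$ to be absolutely continuous in $r$ so that integrating the derivative bound from the boundary inward is legitimate; this is true here but needs a remark, and the distribution-function route avoids it entirely. You also need to actually prove that $v$ is radially decreasing (so that $v=v^{\ast}$); it is not automatic from $v$ being radial. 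The paper does this by writing $v$ explicitly via the ODE $-s_R^{1-d}(r)\,\partial_r(s_R^{d-1}(r)\,\partial_r v)=f^{\ast}(r)$ and reading off $\partial_r v\le 0$ from $f\ge 0$.

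The main place your sketch is too thin is the rigidity case. You assert that $u^{\ast\ast}=w$ forces equality in the Cauchy--Schwarz, BBG, and Hardy--Littlewood steps for almost every $t$, but this does not follow merely by ``tracing back'': equality of $u^{\ast\ast}$ and $w$ gives equality of distribution functions, and one must extract from that equality in the isoperimetric inequality. The paper's argument is more delicate. Using the explicit representation of $w$, it introduces the radius function $\rho(t)$ with $\{w>t\}=B(\rho(t))$, differentiates the representation, and combines it with $\mu_u=\mathrm{const}\cdot\mu_w$ and the Cauchy--Schwarz/coarea bound to obtain the reverse isoperimetric inequality
\[
\frac{\textup{Surf}_g(\{u=t\})}{\textup{Vol}_g(M)}\;\le\;\frac{\beta_{d-1}\,s_{1/\sqrt{K}}^{d-1}(\rho(t))}{\textup{Vol}_{g_1}\bigl(\mathbb{S}^d(1/\sqrt{K})\bigr)}
\]
for a.e.\ $t$; pitted against \eqref{Ineq:IsoIneq2}, this forces the isoperimetric equality. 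One then needs a further step, which your sketch omits: having isoperimetric equality for a.e.\ superlevel set gives, via Cheng's theorem, an isometry $\Phi$ of $M$ onto $\mathbb{S}^d(1/\sqrt{K})$ carrying each $\{u>t_n\}$ to a geodesic ball, and one must pass to the nested union $\Phi(\Omega)=\bigcup_n\Phi(\{u>t_n\})$ along a sequence $t_n\downarrow 0$ to conclude that $\Phi(\Omega)$ itself is a geodesic ball. Fill in these two points (the explicit derivation of the reverse isoperimetric inequality and the exhaustion argument) and your proof is complete.
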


\begin{proof}
We first claim $v=v^{\ast}$. Denote
\[
s_R(t)=R\sin\left(\frac{t}{R}\right)
\]
and observe that $v$ solves the ODE
\[
-\Delta_{g_0}v=-s_R^{1-d}(r)\frac{\partial}{\partial r}\left(s_R^{d-1}(r)\frac{\partial v}{\partial r}\right)=f^{\ast}(r),\qquad \frac{\partial v}{\partial r}(0)=v(R_0)=0,
\]
where $R_0$ denotes the radius of the geodesic ball $\Omega^*$.
Writing
\[
F(w)=\int_0^wf^{\#}(z)\,dz,
\]
it follows that
\begin{align}
v(r)&=\int_{r}^{R_0}s_R^{1-d}(\tau)\int_0^{\tau}s_R^{d-1}(\xi)f^{\ast}(\xi)\,d\xi \,d\tau \nonumber \\
&=\frac{\textup{Vol}_{g_0}(\mathbb{S}^d(R))}{\beta_{d-1}\textup{Vol}_g(M)}\int_r^{R_0}s_R^{1-d}(\tau)F\left(\frac{\textup{Vol}_g(M)}{\textup{Vol}_{g_0}(\mathbb{S}^d(R))}\textup{Vol}_{g_0}(B(\tau))\right)\,d\tau, \label{Eqn:vrep}
\end{align}
where $\beta_{d-1}$ denotes the surface measure of the unit $(d-1)$-sphere and $B(\tau)$ denotes a geodesic ball in $\mathbb S^d(R)$ of radius $\tau$. From this representation for $v$, it follows that $\frac{\partial v}{\partial r}\leq 0$ since $0\leq f$, and so $v=v^{\ast}$.

Define a function $r(t)$ using the equality of sets
\begin{equation}\label{Eqn:Defrfctn}
\{u>t\}^{\ast}=\{v>r(t)\},
\end{equation}
and observe that $r(t)$ is strictly increasing on $(0,\underset{\Omega}{\textup{ess sup}}\,u)$. As in the proof of Theorem \ref{Thm:CSE}, it follows from Sard's Theorem that
\[
\partial \{u>t\}=\{u=t\}
\]
for almost every $t\geq 0$ (and similarly for $v$). By the same result, the following integrals involving the gradient are well-defined for almost every $t\geq 0$. From Cauchy-Schwarz,
\begin{equation}\label{Ineq:CS}
 \int_{\{u=t\}}\frac{1}{|\nabla u|}\,dS_g\geq \frac{\left(\textup{Surf}_g(\{u=t\})\right)^2}{\int_{\{u=t\}} |\nabla u| \,dS_g}.
\end{equation}
The Divergence Theorem and \eqref{Eqn:ffstar} give
\begin{align*}
\int_{\{u=t\}}|\nabla u|\,dS_g &=\int_{\{u>t\}}f\,dV_g\\
& = \frac{\textup{Vol}_{g}(\{u>t\})}{\textup{Vol}_{g_0}(\{v>r(t)\})}\int_{\{v>r(t)\}}(f \big|_{\{u>t\}})^{\ast}\,dV_{g_0} \\
& \leq \frac{\textup{Vol}_{g}(\{u>t\})}{\textup{Vol}_{g_0}(\{v>r(t)\})}\int_{\{v>r(t)\}}f^{\ast}\,dV_{g_0} \\
&=\frac{\textup{Vol}_{g}(\{u>t\})}{\textup{Vol}_{g_0}(\{v>r(t)\})} \int_{\{v=r(t)\}}|\nabla v|\,dS_{g_0},
\end{align*}
and combining with \eqref{Ineq:CS}, we have
\begin{align}
\int_{\{u=t\}}\frac{1}{|\nabla u|}\,dS_{g} & \geq \frac{\left(\textup{Surf}_g(\{u=t\})\right)^2}{\textup{Vol}_g(\{u>t\})} \frac{\textup{Vol}_{g_0}(\{v>r(t)\})}{\left(\textup{Surf}_{g_0}(\{v=r(t)\})\right)^2}  \int_{\{v=r(t)\}} \frac{1}{|\nabla v|}\,dS_{g_0} \nonumber \\
&\geq \frac{\textup{Vol}_{g}(M)}{\textup{Vol}_{g_0}(\mathbb{S}^d(R))}\int_{\{v=r(t)\}}\frac{1}{|\nabla v|}\,dS_{g_0}.\label{Ineq:r}
\end{align}
The first inequality follows since $|\nabla v|$ is constant on $\{v=r(t)\}$ and the second inequality follows from Theorem \ref{Thm:IsoIneq}. On the other hand, combining \eqref{Eqn:Defrfctn} with the coarea formula, we see
\[
\int_t^{\infty}\int_{\{u=s\}}\frac{1}{|\nabla u|}\,dS_g\,ds=\frac{\textup{Vol}_{g}(M)}{\textup{Vol}_{g_0}(\mathbb{S}^d(R))}\int_{r(t)}^{\infty}\int_{\{v=s\}}\frac{1}{|\nabla v|}\,dS_{g_0}\,ds.
\]
Differentiating both sides with respect to $t$ gives
\[
\int_{\{u=t\}}\frac{1}{|\nabla u|}\,dS_{g}=r'(t)\frac{\textup{Vol}_{g}(M)}{\textup{Vol}_{g_0}(\mathbb{S}^d(R))}\int_{\{v=r(t)\}}\frac{1}{|\nabla v|}\,dS_{g_0},
\]
and combining with \eqref{Ineq:r} we find $r'(t)\geq 1$. Since $r(0)=0$, we have $r(t)\geq t$ which implies
\[
\mu_u(t)\leq \frac{\textup{Vol}_{g}(M)}{\textup{Vol}_{g_0}(\mathbb{S}^d(R))}\mu_v(t).
\]
Using the definition of spherical symmetrization, we immediately deduce $u^{\ast}\leq v^{\ast}$, and having already established that $v^{\ast}=v$, the claimed inequality follows.

For the remainder of the proof, we assume $K>0$. Since $\mathbb{S}^d\left(\frac{1}{\sqrt{K}}\right)$ obeys an isoperimetric inequality \eqref{Ineq:IsoIneq2}, all of our work above still holds if we replace $R$ by $\frac{1}{\sqrt{K}}$, $v$ by $w$, $g_0$ by $g_1$, and $\ast$ by $\ast \ast$. In particular, $u^{\ast \ast}\leq w$. For the case of equality, we adapt the techniques of Kesavan \cite{K}. If $u^{\ast \ast}=w$, it follows that
\begin{equation}\label{Eqn:EqnDFs}
\mu_u(t)= \frac{\textup{Vol}_{g}(M)}{\textup{Vol}_{g_1}\left(\mathbb{S}^d\left(\frac{1}{\sqrt{K}}\right)\right)}\mu_w(t)
\end{equation}
for each $t\geq 0$. Using the coarea formula and Cauchy-Schwarz, for almost every $t\geq 0$ we have
\begin{align}
\left(\textup{Surf}_{g}(\{u=t\})\right)^2&=\left(\frac{d}{dt}\int_{\{u>t\}}|\nabla u|\,dV_g \right)^2 \nonumber \\
&\leq -\mu_u'(t)\int_{\{u>t\}}f\,dV_g\nonumber \\
&\leq -\mu_u'(t)F\left(\mu_u(t)\right) \label{Ineq:PerEst}.
\end{align}
Returning our attention to \eqref{Eqn:vrep}, for each $t\in[0,\underset{{\Omega^{\ast \ast}}}{\textup{ess sup}}  \ w]$ let $\rho(t)$ satisfy
\begin{equation}\label{Eqn:Defrho}
t=\frac{\textup{Vol}_{g_1}\left(\mathbb{S}^d\left(\frac{1}{\sqrt{K}}\right)\right)}{\beta_{d-1}\textup{Vol}_g(M)}\int_{\rho(t)}^{R_1}s_{\frac{1}{\sqrt{K}}}^{1-d}(\tau)F\left(\frac{\textup{Vol}_g(M)}{\textup{Vol}_{g_1}\left(\mathbb{S}^d\left(\frac{1}{\sqrt{K}}\right)\right)}\textup{Vol}_{g_1}(B(\tau))\right)\,d\tau,
\end{equation}
where $R_1$ denotes the radius of the geodesic ball $\Omega^{\ast \ast}$ and $B(\tau)$ denotes a geodesic ball in $\mathbb S^d \left(\frac{1}{\sqrt{K}}\right)$ of radius $\tau$. Differentiating  both sides with respect to $t$ gives
\begin{equation}\label{Eqn:Diff1Eqls}
1=-\frac{\textup{Vol}_{g_1}\left(\mathbb{S}^d\left(\frac{1}{\sqrt{K}}\right)\right)}{\beta_{d-1}\textup{Vol}_g(M)}s_{\frac{1}{\sqrt{K}}}^{1-d}(\rho(t))F\left(\frac{\textup{Vol}_g(M)}{\textup{Vol}_{g_1}\left(\mathbb{S}^d\left(\frac{1}{\sqrt{K}}\right)\right)}\textup{Vol}_{g_1}(B(\rho(t))) \right)\rho'(t).
\end{equation}
Unless $f=0$, $w$ is strictly decreasing in $r$, and so it follows from \eqref{Eqn:Defrho} that
\begin{equation}\label{Eqn:VLS}
\{w>t\}=B(\rho(t)).
\end{equation}
Using polar coordinates, we see
\[
\mu_w(t)=\beta_{d-1}\left(\frac{1}{\sqrt{K}}\right)^{d}\int_0^{\sqrt{K}\rho(t)}\sin^{d-1}\theta \,d\theta,
\]
and differentiation yields
\begin{align}
\mu_w'(t)&=\beta_{d-1}\left(\frac{1}{\sqrt{K}}\right)^{d-1}\sin^{d-1}\left(\sqrt{K}\rho(t)\right)\rho'(t) \nonumber \\
&=\beta_{d-1}s_{\frac{1}{\sqrt{K}}}^{d-1}(\rho(t))\rho'(t). \label{Eqn:Muv}
\end{align}
Combining \eqref{Eqn:EqnDFs}, \eqref{Eqn:Diff1Eqls}, \eqref{Eqn:VLS}, and \eqref{Eqn:Muv}, we see
\[
F\left(\mu_u(t)\right)\mu_u'(t)=-\left( \frac{\beta_{d-1}\textup{Vol}_g(M)}{\textup{Vol}_{g_1}\left(\mathbb{S}^d\left(\frac{1}{\sqrt{K}}\right)\right)}s_{\frac{1}{\sqrt{K}}}^{d-1}(\rho(t))\right)^2,
\]
and combining with \eqref{Ineq:PerEst}, we obtain
\begin{equation}\label{Eqn:EqISP}
\frac{\textup{Surf}_{g}(\{u=t\})}{\textup{Vol}_g(M)} \leq \frac{\beta_{d-1}s_{\frac{1}{\sqrt{K}}}^{d-1}(\rho(t))}{\textup{Vol}_{g_1}\left(\mathbb{S}^d\left(\frac{1}{\sqrt{K}}\right)\right)} .
\end{equation}
On the other hand, since $u^{\ast \ast}=w$, it follows from \eqref{Eqn:VLS} that $\{u>t\}^{\ast \ast}$ is a geodesic ball in $\mathbb{S}^d\left(\frac{1}{\sqrt{K}}\right)$ of radius $\rho(t)$. Hence $\partial \{u>t\}^{\ast \ast}$ is a $(d-1)$-dimensional sphere of radius $\frac{1}{\sqrt{K}}\sin \left(\sqrt{K}\rho(t)\right)$. From Theorem $\ref{Thm:IsoIneq}$, we therefore have
\[
\frac{\beta_{d-1}s_{\frac{1}{\sqrt{K}}}^{d-1}(\rho(t))}{\textup{Vol}_{g_1}\left(\mathbb{S}^d\left(\frac{1}{\sqrt{K}}\right)\right)} \leq \frac{\textup{Surf}_{g}(\{u=t\})}{\textup{Vol}_g(M)}.
\]
Combining this inequality with \eqref{Eqn:EqISP}, we deduce that for almost every $t\geq 0$, the set $\{u>t\}$ achieves equality in Theorem \ref{Thm:IsoIneq} and is therefore isometric to a geodesic ball in $\mathbb{S}^d\left(\frac{1}{\sqrt{K}}\right)$. Let $t_n$ denote a strictly decreasing sequence of such $t$-values with $t_n\to 0$ and suppose $\Phi:M \to \mathbb{S}^d\left(\frac{1}{\sqrt{K}}\right)$ is an isometry as guaranteed by Theorem \ref{Thm:IsoIneq}. Observe that
\begin{equation*}\displaystyle
\Phi(\Omega)=\Phi \left(\{u>0\}\right)=\bigcup_{n=1}^{\infty}\Phi \left(\{u>t_n\}\right)
\end{equation*}
expresses $\Phi(\Omega)$ as a nested union of geodesic balls in $\mathbb{S}^d\left(\frac{1}{\sqrt{K}}\right)$, and so we conclude that $\Phi(\Omega)$ is a geodesic ball.

 \end{proof}

We next use Theorem \ref{Thm:CR} to establish a comparison result for moment spectra.
 
 \begin{proof}[Proof of Theorem \ref{Thm:MSI}.]
 For the case $n=1$, let $u_1$ be the solution to the Poisson PDE
 \[
-\Delta_g u_1=1\quad \textup{in }\Omega,\qquad u_1=0\quad \textup{on }\partial \Omega,
\]
and let $v_1$ solve the symmetrized PDE
\[
-\Delta_{g_0} v_1=1\quad \textup{in }\Omega^{\ast},\qquad v_1=0\quad \textup{on }\partial \Omega^{\ast}.
\]
Then $u_1^{\ast}\leq v_1$ by Theorem \ref{Thm:CR}, and combining with \eqref{Eqn:ffstar}, it follows that
\[
\frac{T_1(\Omega)}{\textup{Vol}_g(\Omega)}= \frac{1}{\textup{Vol}_{g_0}(\Omega^{\ast})}\int_{\Omega^{\ast}}u_1^{\ast}\,dV_g\leq \frac{1}{\textup{Vol}_{g_0}(\Omega^{\ast})}\int_{\Omega^{\ast}}v_1\,dV_{g_0}=\frac{T_1(\Omega^{\ast})}{\textup{Vol}_{g_0}(\Omega^{\ast})}.
\]
For the case $n=2$, let $u_2$ solve
 \[
-\Delta_g u_2=2u_1\quad \textup{in }\Omega,\qquad u_2=0\quad \textup{on }\partial \Omega,
\]
let $w_2$ solve
 \[
-\Delta_{g_0} w_2=2u_1^{\ast} \quad \textup{in }\Omega^{\ast},\qquad w_2=0\quad \textup{on }\partial \Omega^{\ast},
\]
and let $v_2$ solve
\[
-\Delta_{g_0} v_2=2v_1\quad \textup{in }\Omega^{\ast},\qquad v_2=0\quad \textup{on }\partial \Omega^{\ast}.
\]
Since $u_1^{\ast} \leq v_1$, the Maximum Principle gives $w_2\leq v_2$. Applying Theorem \ref{Thm:CR} and \eqref{Eqn:ffstar} once again, we see
\begin{align*}
\frac{T_2(\Omega)}{\textup{Vol}_g(\Omega)}&=\frac{1}{\textup{Vol}_{g_0}(\Omega^{\ast})}\int_{\Omega^{\ast}}u_2^{\ast}\,dV_{g_0}\\
 & \leq \frac{1}{\textup{Vol}_{g_0}(\Omega^{\ast})}\int_{\Omega^{\ast}}w_2\,dV_{g_0}\\
  & \leq \frac{1}{\textup{Vol}_{g_0}(\Omega^{\ast})}\int_{\Omega^{\ast}}v_2\,dV_{g_0}\\
  &=\frac{T_2(\Omega^{\ast})}{\textup{Vol}_{g_0}(\Omega^{\ast})}.
\end{align*}
Inequality \eqref{Ineq:MomSp} follows by iterating the above argument repeatedly.

To establish \eqref{Ineq:MomSp2} observe that our work above still holds if we replace $R$ by $\frac{1}{\sqrt{K}}$, $g_0$ by $g_1$, and $\ast$ by $\ast \ast$. 
If for some index $n$ we have equality in \eqref{Ineq:MomSp2}, then $u_n^{\ast \ast}=v_n$, where $u_n$ and $v_n$ come from the appropriate Poisson hierarchy \eqref{Eqn:PoisHier}. The result now follows from the equality case of Theorem \ref{Thm:CR}.

\end{proof}

We finally establish the following Faber-Krahn inequality (see also \cite{BM}):

\begin{corollary}\label{Cor:FK}
Let $M$, $\Omega$, $\mathbb{S}^d(R)$, $\Omega^{\ast}$, $\mathbb{S}^d\left(\frac{1}{\sqrt{K}}\right)$, and $\Omega^{**}$ be as in Theorem \ref{Thm:MSI}. Then the lowest Dirichlet eigenvalues satisfy
\[
\lambda_1(\Omega^{\ast})\leq \lambda_1(\Omega).
\]
Moreover, if $K>0$, we also have
\[
\lambda_1(\Omega^{\ast \ast})\leq \lambda_1(\Omega), 
\]
and if equality holds, $M$ is isometric to $\mathbb{S}^d\left(\frac{1}{\sqrt{K}}\right)$ and $\Omega$ is isometric to an appropriate geodesic ball in $\mathbb{S}^d\left(\frac{1}{\sqrt{K}}\right)$.
\end{corollary}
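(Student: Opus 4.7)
The plan is to reduce the Faber--Krahn inequality $\lambda_1(\Omega^{\ast})\leq \lambda_1(\Omega)$ to the moment-spectrum comparison of Theorem \ref{Thm:MSI} via the asymptotic identification in Theorem \ref{Th1}, and then to handle the equality case by applying the rearrangement comparison of Theorem \ref{Thm:CR} directly to the first-eigenfunction PDE.

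\textbf{The inequalities.} The first Dirichlet eigenfunction $\phi_1$ of any smoothly bounded (connected) domain is strictly positive in the interior (by the strong maximum principle and simplicity of the ground state), so $\int \phi_1\,dV>0$ and therefore $\lambda_1\in \textup{spec}^{\ast}$. Being the minimum of $\textup{spec}$, it is also the minimum $\nu_1$ of $\textup{spec}^{\ast}$. By Theorem \ref{Th1},
\[
\lambda_1(\Omega)^{-1}=\lim_{n\to\infty}\left(\frac{T_n(\Omega)}{n!}\right)^{1/n},
\]
with the analogous formulas on $\Omega^{\ast}$ and $\Omega^{\ast\ast}$. Theorem \ref{Thm:MSI} gives $T_n(\Omega)\leq C\, T_n(\Omega^{\ast})$ with $C=\textup{Vol}_g(\Omega)/\textup{Vol}_{g_0}(\Omega^{\ast})$. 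Since $C^{1/n}\to 1$, taking $n$th roots and passing to the limit yields $\lambda_1(\Omega)^{-1}\leq \lambda_1(\Omega^{\ast})^{-1}$, i.e.\ $\lambda_1(\Omega^{\ast})\leq \lambda_1(\Omega)$. The same argument applied to \eqref{Ineq:MomSp2} gives $\lambda_1(\Omega^{\ast\ast})\leq\lambda_1(\Omega)$ when $K>0$.

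\textbf{The equality case.} Assume $K>0$ and set $\lambda:=\lambda_1(\Omega)=\lambda_1(\Omega^{\ast\ast})$. View the eigenvalue equation as a Poisson problem $-\Delta_g\phi_1=\lambda\phi_1$ with continuous nonnegative source $f=\lambda\phi_1$, and let $v$ be the unique smooth solution on $\Omega^{\ast\ast}$ of
\[
-\Delta_{g_1}v=\lambda\phi_1^{\ast\ast}\ \text{in }\Omega^{\ast\ast},\qquad v=0\ \text{on }\partial\Omega^{\ast\ast}.
\]
Since $(\lambda\phi_1)^{\ast\ast}=\lambda\phi_1^{\ast\ast}$, Theorem \ref{Thm:CR} gives $\phi_1^{\ast\ast}\leq v$, so $-\Delta_{g_1}v=\lambda\phi_1^{\ast\ast}\leq\lambda v$. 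Multiplying by $v\geq 0$, integrating, and using $v=0$ on $\partial\Omega^{\ast\ast}$ yields
\[
\int_{\Omega^{\ast\ast}}|\nabla v|^2\,dV_{g_1}\leq \lambda \int_{\Omega^{\ast\ast}}v^2\,dV_{g_1},
\]
so the Rayleigh quotient of $v$ on $\Omega^{\ast\ast}$ is at most $\lambda=\lambda_1(\Omega^{\ast\ast})$. The variational characterization of the ground state then forces $v$ to be a first eigenfunction, $-\Delta_{g_1}v=\lambda v$, which together with the defining Poisson equation gives $v=\phi_1^{\ast\ast}$. The equality case of Theorem \ref{Thm:CR} applied to the pair $(\phi_1,v)$ now delivers an isometry $M\cong\mathbb{S}^d(1/\sqrt K)$ under which $\Omega$ becomes a geodesic ball.

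\textbf{The main obstacle.} The delicate point is converting the scalar identity $\lambda_1(\Omega)=\lambda_1(\Omega^{\ast\ast})$ into the pointwise identity $\phi_1^{\ast\ast}=v$ needed to invoke the equality case of Theorem \ref{Thm:CR}; one cannot extract this from the asymptotic comparison $T_n(\Omega)\leq C\,T_n(\Omega^{\ast\ast})$ alone, since equality in the $n$th root limit does not propagate to equality at any fixed $n$. The Rayleigh-quotient bootstrap above threads this needle by combining the pointwise bound $\phi_1^{\ast\ast}\leq v$ with the PDE for $v$, forcing $v$ to attain the minimum Rayleigh quotient on $\Omega^{\ast\ast}$ and thereby to equal its own source $\phi_1^{\ast\ast}$.
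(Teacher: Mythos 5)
Your proof is correct and follows essentially the same route as the paper's: the inequality is obtained by combining Theorem \ref{Th1}'s asymptotic identification of $\lambda_1$ with the moment-spectrum comparison of Theorem \ref{Thm:MSI}, and the equality case uses the Kesavan-style Rayleigh-quotient bootstrap, solving the auxiliary Poisson problem with the symmetrized eigenfunction as source, invoking $\phi_1^{\ast\ast}\leq v$ from Theorem \ref{Thm:CR}, and concluding that $v$ is a first eigenfunction and hence equals $\phi_1^{\ast\ast}$, which triggers the equality case of Theorem \ref{Thm:CR}. The only cosmetic difference is that you bound $-\Delta_{g_1}v\leq\lambda v$ pointwise before multiplying by $v$ and integrating, whereas the paper integrates by parts first and then bounds $\int wu^{\ast\ast}\leq\int w^2$; these are the same computation.
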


\begin{proof}
Since any nontrivial eigenfunction for $\lambda_1(\Omega)$ has a sign, it follows that $\nu_1(\Omega)=\lambda_1(\Omega)$. Combining Theorems \ref{Thm:MSI} and \ref{Th1}, we see
\[
\frac{1}{\lambda_1(\Omega)}=\lim_{n\to \infty} \left( \frac{T_n(\Omega)}{n!}\right)^{\frac{1}{n}}\leq \lim_{n\to \infty} \left( \frac{T_n(\Omega^{\ast})}{n!}\frac{\textup{Vol}_g(\Omega)}{\textup{Vol}_{g_0}(\Omega^{\ast})}\right)^{\frac{1}{n}}=\frac{1}{\lambda_1(\Omega^{\ast})}.
\]
The proof that $\lambda_1(\Omega^{\ast \ast})\leq \lambda_1(\Omega)$ uses the same argument, invoking \eqref{Ineq:MomSp2} instead of \eqref{Ineq:MomSp}.

To establish the equality claim, we again adapt the argument of Kesavan \cite{K}. Assume $\lambda_1(\Omega)=\lambda_1(\Omega^{\ast \ast})$, and let $u$ be a $L^2$-normalized solution to the eigenvalue problem
\[
-\Delta_g u=\lambda_1(\Omega)u \quad \textup{in }\Omega,\qquad u=0\quad \textup{on }\partial \Omega.
\]
Let $w$ solve the symmetrized problem
\begin{equation}\label{Eqn:vPDE}
-\Delta_{g_1} w=\lambda_1(\Omega)u^{\ast \ast} \quad \textup{in }\Omega^{\ast \ast},\qquad w=0\quad \textup{on }\partial \Omega^{\ast \ast}.
\end{equation}
Using $w$ as a trial function in the variational characterization for $\lambda_1(\Omega^{\ast \ast})$, we see
\[
\lambda_1(\Omega^{\ast \ast})\leq \frac{\int_{\Omega^{\ast \ast}}|\nabla w|^2\,dV_{g_1}}{\int_{\Omega^{\ast \ast}}w^2\,dV_{g_1}}.
\]
On the other hand, from Theorem \ref{Thm:CR} we see $u^{\ast \ast }\leq w$, and so integration by parts gives
\begin{align*}
\int_{\Omega^{\ast \ast}}|\nabla w|^2\,dV_{g_0}&=\lambda_1(\Omega)\int_{\Omega^{\ast \ast}}wu^{\ast \ast}\,dV_{g_1}\\
& \leq \lambda_1(\Omega)\int_{\Omega^{\ast \ast}}w^2\,dV_{g_1}.
\end{align*}
Since $\lambda_1(\Omega)=\lambda_1(\Omega^{\ast \ast})$, we deduce that $w$ minimizes the Rayleigh quotient for $\lambda_1(\Omega^{\ast \ast})$ and is therefore a corresponding eigenfunction. It then follows from \eqref{Eqn:vPDE} that $u^{\ast \ast}=w$. The equality claim now follows from Theorem \ref{Thm:CR}.

\end{proof}

\end{document}